\DeclareMathAlphabet{\cat}{OT1}{cmss}{m}{sl}
\numberwithin{equation}{section}
\newtheorem{thm}[equation]{Theorem}
\newtheorem{lemma}[equation]{Lemma}
\newtheorem{cor}[equation]{Corollary}
\theoremstyle{definition}
\newtheorem{rem}[equation]{Remark}
\newtheorem{example}[equation]{Example}
\newtheorem{dfn}[equation]{Definition}
\newtheorem{problem}[equation]{Problem}
\newcommand{\onto}{\rightarrow\!\!\rightarrow}
\newcommand{\codim}{\operatorname{codim}}
\newcommand{\colim}{\operatorname{colim}}
\newcommand{\SB}{X}
\newcommand{\ind}{\mathop{\mathrm{ind}}}
\newcommand{\rk}{\mathop{\mathrm{rk}}}
\newcommand{\CH}{\mathop{\mathrm{CH}}\nolimits}
\newcommand{\PGL}{\operatorname{\mathrm{PGL}}}
\newcommand{\Ch}{\mathop{\mathrm{Ch}}\nolimits}
\newcommand{\BCh}{\mathop{\overline{\mathrm{Ch}}}\nolimits}
\newcommand{\res}{\mathop{\mathrm{res}}\nolimits}
\newcommand{\cd}{\mathop{\mathrm{cdim}}\nolimits}
\newcommand{\mult}{\operatorname{mult}}
\newcommand{\Z}{\mathbb{Z}}
\newcommand{\F}{\mathbb{F}}
\newcommand{\Spec}{\operatorname{Spec}}
\newcommand{\End}{\operatorname{End}}
\newcommand{\Hom}{\operatorname{Hom}}
\newcommand{\Aut}{\operatorname{Aut}}
\newcommand{\compose}{\circ}
\newcommand{\CM}{\operatorname{CM}}
\newcommand{\corr}{\rightsquigarrow}
\renewcommand{\phi}{\varphi}
\newcommand{\RatM}{\dashrightarrow}
\newcommand{\cT}{T}
\newcommand{\cX}{\mathfrak{X}}
\newcommand{\BCM}{\operatorname{\overline{CM}}}
\title
[Upper motives and
incompressibility]
{Upper motives of algebraic groups and \\
incompressibility of Severi-Brauer varieties}
\keywords
{Algebraic groups,
projective homogeneous varieties,
Chow groups.\\
{\em Mathematical Subject Classification (2010):}
14L17; 14C25}
\author
{Nikita A. Karpenko}
\address
{
UPMC Sorbonne Universit\'es\\
Institut de Math\'ematiques de Jussieu\\
F-75252 Paris\\
FRANCE}
\address
{{\it Web page:}
{\tt www.math.jussieu.fr/\~{ }karpenko}}
\email {karpenko {\it at} math.jussieu.fr}
\date{18 April 2009. Updated: 9 July 2011}
\thanks
{Supported by the Collaborative Research Centre 701 of the Bielefeld University}
\begin{document}

\begin{abstract}
Let $G$ be a semisimple affine algebraic group of inner type over a field $F$.
We write $\cX_G$ for the class of all finite direct products of projective $G$-homogeneous
$F$-varieties.
We determine the structure of the {\em Chow motives with coefficients in a finite field} of the varieties in $\cX_G$.
More precisely, it is known that
the motive of any variety in $\cX_G$ decomposes (in a unique way) into a sum of
indecomposable motives, and
we describe the indecomposable summands
which appear in the decompositions.

In the case where $G$ is the group $\PGL A$ of automorphisms of
a given central simple $F$-algebra $A$,
for any variety in the class $\cX_G$ (which includes the {\em generalized Severi-Brauer varieties}
of the algebra $A$) we determine its {\em canonical dimension at any prime $p$}.
In particular, we find out which varieties in $\cX_G$ are {\em $p$-incompressible}.
If $A$ is a division algebra of degree $p^n$ for some $n\geq0$, then the list of
$p$-incompressible varieties includes the generalized Severi-Brauer variety $X(p^m;A)$ of
ideals of reduced dimension $p^m$ for
$m=0,1,\dots,n$.
\end{abstract}

\maketitle


\section
{Introduction}
\label{Introduction}

A smooth complete irreducible variety $X$ over a field $F$ is said to be {\em incompressible},
if any rational map $X\RatM X$ is dominant.

An important (at least by the amount of available applications, see, e.g.,
\cite{MR1751923}, \cite{MR2358058}, or \cite{BRV})
example of an incompressible variety is as follows.
Let $p$ be a positive prime integer.
Let $D$ be a central division $F$-algebra of degree a power of $p$, say $p^n$.
For any integer $i$, the {\em generalized Severi-Brauer variety} $X(i;D)$ is the $F$-variety
of right ideals in $D$ of reduced dimension $i$.
The variety $X(1;D)$
(the usual Severi-Brauer variety of $D$) is incompressible.

In fact, the variety $X=X(1;D)$ has a stronger property -- {\em $p$-incompressibility},
defined in \S\ref{Canonical dimension}.
The original proof, given in \cite{MR1356536} and \cite{MR1751923}, makes use of Quillen's computation
of $K$-theory of Severi-Brauer varieties.
A recent proof, given in \cite{MR2024643},
makes use of Steenrod operations on the modulo $p$ Chow groups
(it explains the reason of the $p$-incompressibility better but works only over fields of characteristic $\ne p$).
A third particularly simple proof is given here (see Corollaries \ref{SB-cor} and \ref{indecomposable-incompressible}).

One of the main results of the present paper is the incompressibility theorem (Theorem \ref{main})
which affirms that the variety $X(p^m;D)$ is
$p$-incompressible also for $m=1,\dots,n-1$
(in the case of $p=2$ and $m=n-1$ this was shown earlier by Bryant Mathews
\cite{MR2713320}
using a different method).
The remaining results of this paper are either obtained on the way to the
main result or are
consequences of it.

We start in the general context of an arbitrary semisimple affine algebraic group $G$
of inner type over a field $F$.
Let $T$ be the set of the conjugacy classes of the maximal parabolic subgroups in $\bar{G}=G_{\bar{F}}$
($T$ can be identified with the set of vertices of the Dynkin diagram of $G$).
To each  $\tau\subset T$, a projective $G$-homogeneous $F$-variety $X_\tau$ is associated in a standard
way.
We define an indecomposable motive $M_\tau$
in the category of Chow motives with coefficients in $\F_p$, as the summand
in the complete motivic decomposition of $X_\tau$
such that the $0$-codimensional Chow group of $M_\tau$ is non-zero.
We show (see Theorem \ref{Gbasic1})
that the motive of any finite direct product of projective $G$-homogeneous $F$-varieties
decomposes into a sum of shifts of the motives $M_\tau$ (with various $\tau$).
Therefore we solve the inner case of the following problem:

\begin{problem}
Let $\cX_G$ be the class of all finite direct products of projective homogeneous varieties
under an action of a semisimple affine algebraic group $G$.
According to \cite{MR2264459} (see also \S\ref{Chow motives with finite coefficients} here),
the motive (still with $\F_p$ coefficients,
$p$ a fixed prime) of any variety in $\cX_G$
decomposes and in a unique way in a finite direct sum of indecomposable motives.
The problem is to describe the indecomposable summands which appear this way.
\end{problem}

Our method can also be applied to the groups of outer type.
Since we are mainly interested in the inner type $\cat{A}$ and for the sake of simplicity,
we do not work with groups of outer type in this paper.\footnote{The outer case has been treated in
\cite{outer}, a successor paper.}

For $G=\Aut D$, the above result translates as follows (see Theorem \ref{basic1}, where a $p$-primary
division algebra $D$ is replaced by a more general object, an arbitrary central simple algebra $A$).
To each integer $m=0,1,\dots,n$, an indecomposable motive $M_{m,D}$
in the category of Chow motives with coefficients in $\F_p$ is associated.
This is the summand in the complete motivic decomposition of the variety $X(p^m;D)$
such that the $0$-codimensional Chow group of $M_{m,D}$ is non-zero.
The motive of any variety in $\cX_D:=\cX_G$
decomposes into a sum of shifts of the motives $M_{m,D}$
(with various $m$).

With this in hand, we prove two structural results concerning the motives $M_{m,D}$
(Theorem \ref{basic2}).
We show that
the $d$-dimensional Chow group of $M_{m,D}$, where $d=\dim X(p^m;D)$, is also non-zero.
This result is equivalent to the $p$-incompressibility of the variety $X(p^m;D)$,
so that we get the incompressibility theorem (Theorem \ref{main}) at this point.
The second structural result on the motive $M_{m,D}$ is a computation of the $p$-adic
valuation of its rank.
In fact, we cannot separate the proofs of these two structural results.
We prove them
simultaneously by induction on $\deg D$ (and using Theorem \ref{basic1}).

An immediate consequence of the incompressibility theorem is as follows.
We recall (in \S\ref{Canonical dimension})
the notion of {\em canonical dimension at $p$} (or {\em canonical $p$-dimension})
$\cd_p(X)$ of a smooth complete irreducible
algebraic variety $X$.
This is a certain non-negative integer satisfying $\cd_p X\leq \dim X$;
moreover, $\cd_p X=\dim X$ if and only if
$X$ is $p$-incompressible.
In particular, by our main result,
$
\cd_p X(p^m;D)=\dim X(p^m;D)=p^m(p^n-p^m).
$
The canonical dimension at $p$ of any
variety in $\cX_A:=\cX_{\Aut A}$, where $A$ is an arbitrary central simple $F$-algebra,
can easily be computed in terms of $\cd_p X(p^m;D)$, where $D$ is the $p$-primary part of a division algebra
Brauer-equivalent to $A$ (see Corollary \ref{cor basic2}).

In spite of a large number of results obtained,
one may say that (the motivic part of) this paper raises more questions than it answers.
Indeed, although we show that the motives of the varieties in $\cX_G$ decompose into
sums of shifts of $M_\tau$ (and find  a restriction on $\tau$ in terms of a given variety), we
do not precisely determine this decomposition (even for $G$ simple of inner type $\cat{A}$):
we neither know how many copies of
$M_\tau$ (for a given $\tau$ and a given variety) do really appear in the decomposition, nor do we
determine the shifting numbers.
Moreover, the understanding of the structure of the motives $M_\tau$ themselves, which we provide for
$G$ simple of inner type $\cat{A}$  (our main case of interest), is not satisfactory.
It could be that $M_{m,D}$ is always the whole motive of the variety $X(p^m,D)$ (that
is, the motive of this variety probably is indecomposable): we do not possess a single
counter-example.
In fact, the variety $X(p^m,D)$ {\em is} indecomposable for certain values of $p$,
$n$, and $m$.
Two cases have been known for a long time: $m=0$ (the Severi-Brauer case, see Corollary \ref{SB-cor})
and $m=1$ with $p=2=n$ (reducing the exponent of $D$ to $2$, we come to the case of an Albert
quadric here).
The Albert case is generalized to arbitrary $n$ in Theorem \ref{2-2^n}.
The remaining values of $p,n,m$ should be studied in this regard.\footnote{Recently, Maksim Zhykhovich has shown
that for each of the remaining values of $p,n,m$ the corresponding variety {\em is decomposable}, \cite{maksim}.}

But the qualitative
analysis is done
(for instance, the properties of $M_{m,D}$, we establish, show that this motive behaves
essentially like the whole motive of the variety $X(p^m,D)$ even if it is ``smaller'').
And the proofs are not complicated.
They consist in a study of generalized  Severi-Brauer varieties which are twisted forms of
grassmannians, and there is no single Young diagram in the text!
Combinatorics or complicated formulas
do not show up at all, in particular, because we (can) neglect the shifting
numbers of motivic summands in most places.
The results we are getting this way are less precise but, as we believe, they contain
the essential piece of information.
They can be (and are) applied (in \cite{hypernew-tignol}) to prove the hyperbolicity
conjecture on orthogonal involutions.\footnote{They are also applied in \cite{oddisotro-tignol}
to prove the isotropy conjecture on involutions and in \cite{qweil} to prove incompressibility of
certain Weil transfers of generalized Severi-Brauer varieties.
For some further applications see \cite{gog}, \cite{og}, \cite{gug}, \cite{unitary}, \cite{isouni}.}
(This result has been recently expanded by Jean-Pierre Tignol to symplectic and unitary involutions.)

We conclude the introduction by some remarks on the motivic category we are using.
First of all, the category of Chow motives with coefficients in $\F_p$
(or, slightly more general, with coefficients in a finite connected commutative ring $\Lambda$),
in which we are working in this paper, can be replaced by a simpler category.
This simpler category is constructed in exactly the same way as the category of
Chow motives with the only difference that one kills the elements of Chow groups
which vanish over some extension of the base field (see Remark \ref{reduced motives}).
Working with this simpler category, we do not need the nilpotence tricks
(the nilpotence theorem and its standard consequences,
cf.\!\! \S\ref{Chow motives with finite coefficients}) anymore.
This simplification of the motivic category isn't harmful to any external application of our motivic results.
So, it is more for a question of taste than for a question of necessity that we stay with the usual Chow motives.

On the other hand, somebody may think that our category of usual Chow motives is not
honest or usual enough because these are Chow motives with coefficients in $\F_p$ and not in
$\Z$.
Well, there are at least three arguments here.
First, decompositions into sums of indecomposables are not unique for
coefficients in $\Z$, even in the case of projective homogenous varieties of inner type $\cat{A}$
(see \cite[Example 32]{MR2264459} or
\cite[Corollary 2.7]{MR2249542}).
Therefore the question of describing the indecomposables does not seem
so reasonable for the integral motives.
Second, any decomposition with coefficients in $\F_p$ lifts (and in a
unique way) to the coefficients $\Z/p^n\Z$ for any $n\geq2$,
\cite[Corollary 2.7]{J-invariant}.
Moreover, it also lifts to $\Z$ (non-uniquely this time) in the case of varieties in $\cX_G$, where $G$ is
a semisimple affine algebraic group of inner type for which $p$ is the unique torsion prime,
\cite[Theorem 2.16]{J-invariant}.
And third, maybe the most important argument, is that the results on motives with coefficients in $\F_p$
are sufficient for the applications.

\bigskip
\noindent
{\sc Acknowledgments.}
Fruitful and stimulating discussions with Alexander Vishik and Kirill Zainoulline  helped to improve the initial version of this manuscript.
A particular gratitude goes to Kirill Zainoulline for noticing
that the statement and the proof of Theorem \ref{Gbasic1}, given initially only for
a simple algebraic groups of inner type $\cat{A}$, are valid for an arbitrary semisimple group.
Thanks to Mark MacDonald for finding numerous typos and a mistake in an earlier version of the paper.

\section
{Preliminaries}

This section is long because it also includes some non-standard (but simple) material.

\subsection
{Chow motives with finite coefficients}
\label{Chow motives with finite coefficients}

Our basic reference for Chow groups and Chow motives (including notation)
is \cite{EKM}.
We fix an
associative unital commutative ring $\Lambda$,
and for a variety (i.e., a separated scheme of finite type over a field)
$X$ we write $\Ch(X)$ for its Chow group with coefficients in
$\Lambda$ (while we write $\CH(X)$ for its integral Chow group).
Our category of motives is
the category $\CM(F,\Lambda)$ of {\em graded Chow motives with
coefficients in $\Lambda$,} \cite[definition of \S64]{EKM}.
By a {\em sum} of motives we always mean a {\em direct} sum.
We also write $\Lambda$ for the motive $M(\Spec F)\in\CM(F,\Lambda)$.
A {\em Tate motive} is a motive $\Lambda(i)$ with $i$ an integer (which may differ from $\pm1$).

We shall often assume that our coefficient ring $\Lambda$ is finite.
This simplifies significantly the situation (and is sufficient for most applications).
For instance, for a finite $\Lambda$, the endomorphism rings of finite sums
of
Tate motives
are also finite and the following easy statement applies:

\begin{lemma}
\label{finite}
An appropriate power of any element of any {\em finite} associative (not necessarily
commutative) ring is idempotent.
\end{lemma}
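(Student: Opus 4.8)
The plan is to use the finiteness of the ring to force a repetition among the powers of a given element, and then to show that some power of that element is genuinely idempotent. Let $R$ be a finite associative ring and $x \in R$. Since $R$ is finite, the sequence $x, x^2, x^3, \dots$ cannot consist of pairwise distinct elements, so there exist integers $1 \le i < j$ with $x^i = x^j$. Set $d = j - i > 0$; then $x^i = x^{i+d}$, and by an immediate induction on $k \ge 0$ one gets $x^{i+k} = x^{i+k+d}$, hence $x^m = x^{m+d}$ for every $m \ge i$. More generally $x^m = x^{m + sd}$ for all $m \ge i$ and all $s \ge 0$.

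Next I would produce the idempotent. Choose a multiple $N = sd$ of $d$ with $N \ge i$ (for instance $N = id$). Then for the element $e := x^N$ I claim $e$ is idempotent: indeed $e^2 = x^{2N} = x^{N + N}$, and since $N \ge i$ and $N$ is a multiple of $d$, the relation $x^m = x^{m+d}$ applied $N/d$ times gives $x^{N+N} = x^N$, i.e. $e^2 = e$. Thus $x^N$ is an idempotent power of $x$, which is exactly the assertion. Note commutativity of $R$ is never used, in accordance with the parenthetical remark in the statement.

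The argument is essentially routine — the only point requiring a little care is making sure the exponent one lands on is simultaneously large enough (at least $i$, so that the eventual periodicity $x^m = x^{m+d}$ is in force) and a multiple of the period $d$ (so that squaring it returns to the same power); taking $N = id$ handles both constraints at once. There is no real obstacle here; the lemma is stated precisely because this elementary fact will be invoked repeatedly (to replace endomorphisms of sums of Tate motives by idempotents and thereby extract motivic summands) and it is cleaner to isolate it once.
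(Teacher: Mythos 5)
Your proof is correct and is essentially the same argument as the paper's: finiteness forces a relation $x^a = x^{a+b}$ (your $x^i = x^{i+d}$), and then a suitably large multiple of the period — you take $x^{id}$, the paper takes $x^{ab}$ — is idempotent. You have merely spelled out the eventual-periodicity step that the paper leaves implicit.
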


\begin{proof}
Since the ring is finite, any element $x$ satisfies $x^a=x^{a+b}$
for some $a\geq1$ and $b\geq1$.
It follows that $x^{ab}$ is idempotent.
\end{proof}

Let $X$ be a smooth complete variety over $F$ and let $M$ be a motive.
We call $M$ {\em split}, if it is a finite sum of Tate motives.
We call $X$ {\em split}, if its {\em integral} motive $M(X)\in\CM(F,\Z)$
(and therefore the motive of $X$ with an arbitrary coefficient ring $\Lambda$)
is split.
We call $M$ or $X$ {\em geometrically split}, if it splits over
a field extension of $F$.
We say that $X$ satisfies the {\em nilpotence principle}, if for any field extension
$E/F$ and any coefficient ring $\Lambda$,
the kernel of the change of field homomorphism $\End(M(X))\to\End(M(X)_E)$ consists
of nilpotents.
Any projective homogeneous (under an action of a semisimple affine algebraic group)
variety is geometrically split and satisfies the nilpotence principle,
\cite{MR2110630}.

\begin{cor}
\label{cor-finite}
Assume that the coefficient ring $\Lambda$ is finite.
Let $X$ be a geometrically split variety satisfying the nilpotence principle.
Then an appropriate power of any endomorphism of the motive of $X$
is a projector.
\end{cor}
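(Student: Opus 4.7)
The plan is to apply Lemma \ref{finite} twice: first after extending scalars to a splitting field, and then to a finite commutative subring of $\End(M(X))$ manufactured from the outcome.

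First I would fix a field extension $E/F$ over which $X$ splits. Then $M(X)_E$ is a finite direct sum of Tate motives, and since $\Lambda$ is assumed finite, the endomorphism ring $\End(M(X)_E)$ is finite as well (it is a direct sum of copies of $\Lambda$, one for each pair of Tate summands with equal shift). Writing $\bar f$ for the image of a given $f \in \End(M(X))$ under the change of field map, Lemma \ref{finite} yields an integer $N \geq 1$ such that $\bar f^N \in \End(M(X)_E)$ is idempotent.

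Next, set $g = f^N$. Then $g^2 - g$ maps to zero in $\End(M(X)_E)$, so by the nilpotence principle it is nilpotent: $(g^2 - g)^k = g^k(g-1)^k = 0$ in $\End(M(X))$ for some $k \geq 1$. The element $g$ therefore satisfies the monic polynomial $t^k(t-1)^k \in \Lambda[t]$ of degree $2k$. Since $\Lambda$ sits in the center of $\End(M(X))$, the commutative $\Lambda$-subalgebra $\Lambda[g] \subseteq \End(M(X))$ is a quotient of $\Lambda[t]/(t^k(t-1)^k)$, hence has at most $|\Lambda|^{2k}$ elements.

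Now I apply Lemma \ref{finite} a second time, to $g$ viewed as an element of the finite ring $\Lambda[g]$: there exists $M \geq 1$ with $g^M$ idempotent in $\Lambda[g]$, and therefore in $\End(M(X))$. Then $f^{NM} = g^M$ is the desired projector power of $f$. The one subtlety worth flagging is that the nilpotence principle only asserts that the kernel of the change of field map is a \emph{nil} ideal, not a nilpotent one, so one cannot directly lift an idempotent from $\End(M(X)_E)$ to $\End(M(X))$; the role of the first application of Lemma \ref{finite} is precisely to produce a single element $g$ whose nilpotence degree $k$ confines $g$ to a finite subring, where the lemma can be invoked a second time.
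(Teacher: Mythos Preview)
Your proof is correct and follows the same overall strategy as the paper's: pass to a splitting field, use Lemma~\ref{finite} to make the image of $f$ idempotent, and then invoke the nilpotence principle to conclude that $\varepsilon = g^2 - g$ is nilpotent. The two arguments diverge only in how they extract an honest idempotent from this last fact.

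The paper proceeds by an explicit binomial computation: it picks $n$ with $\varepsilon^n = 0$ and $n\varepsilon = 0$ (possible since $\Lambda$ is finite), and checks that $(g+\varepsilon)^{n^n} = g^{n^n}$ because the intermediate binomial coefficients $\binom{n^n}{i}$ with $0<i<n$ are divisible by $n$; since $g+\varepsilon = g^2$, this gives $g^{2n^n}=g^{n^n}$. Your route is to observe that $g$ satisfies the monic polynomial $t^k(t-1)^k$, so the commutative subring $\Lambda[g]$ is finite, and then to reapply Lemma~\ref{finite} there. Your argument is arguably cleaner, as it recycles the lemma instead of introducing a separate combinatorial identity; the paper's version has the minor advantage of producing an explicit exponent. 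Your closing remark about nil versus nilpotent ideals is a reasonable caution, though note that even modulo a nilpotent two-sided ideal one still needs \emph{some} argument to lift idempotents, so the issue is not really nil-vs-nilpotent but simply that idempotent lifting is never free.
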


\begin{proof}
Let $\bar{F}/F$ be a splitting field of the motive $M(X)$, that is, $M(X)_{\bar{F}}$ is a
sum of Tate motives.
Let $f$ be an endomorphism of $M(X)$.
Since $\Lambda$ is finite, the ring
$\End(M(X)_{\bar{F}})$ is finite.
Therefore a power of $f_{\bar{F}}$ is idempotent by Lemma \ref{finite},
and (replacing $f$ by an appropriate power of $f$) we may assume that $f_{\bar{F}}$ is idempotent.
Since $X$ satisfies the nilpotence principle,
the element $\varepsilon:=f^2-f$ is nilpotent.
Let $n$ be a positive integer such that $\varepsilon^n=0=n\varepsilon$.
Then $(f+\varepsilon)^{n^n}=f^{n^n}$ because the binomial coefficients
$\binom{n^n}{i}$ for $i<n$  are divisible by $n$.
Therefore $f^{n^n}$ is a projector.
\end{proof}

\begin{lemma}[{cf.\! \cite[Theorem 28]{MR2264459}}]
\label{exists}
Assume that the coefficient ring $\Lambda$ is finite.
Let $X$ be a geometrically split variety satisfying the nilpotence principle and let
$\pi\in\End(M(X))$ be a projector.
Then the motive $(X,\pi)$ decomposes into a finite sum of indecomposable motives.
\end{lemma}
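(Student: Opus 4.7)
The plan is to bound the number of pairwise orthogonal nonzero projectors in $\End(M(X))$ via a splitting-field computation, and then conclude by iterated refinement. Since $X$ is geometrically split, fix an extension $\bar F/F$ with $M(X)_{\bar F}$ a finite direct sum of Tate motives. The $\Hom$-group between two Tate motives in $\CM(\bar F,\Lambda)$ is either $0$ or a copy of $\Lambda$, and since $\Lambda$ is finite and the direct sum is finite, the ring $R := \End(M(X)_{\bar F})$ is finite.

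In any finite ring, only finitely many pairwise orthogonal nonzero idempotents can coexist: for such a sequence $e_1, e_2, \ldots$, the partial sums $\sum_{i \leq n} e_i$ are pairwise distinct. The nilpotence principle implies that the kernel of the base change map $\End(M(X)) \to R$ is a nil ideal, which cannot contain a nonzero idempotent (a nilpotent idempotent is zero); moreover, orthogonality is preserved by base change. Hence any pairwise orthogonal family of nonzero projectors in $\End(M(X))$ injects as a family of the same cardinality into $R$, and so its size is bounded by a fixed integer $N = N(R)$.

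Now decompose $(X, \pi)$ by iterated refinement. Whenever a current orthogonal decomposition $\pi = \pi_1 + \cdots + \pi_k$ (with each $\pi_j$ a nonzero projector summing to $\pi$) contains a decomposable summand $(X, \pi_j)$, pick a non-trivial idempotent $\rho \in \End((X, \pi_j)) = \pi_j \End(M(X)) \pi_j$. Since $\rho$ lies in $\pi_j \End(M(X)) \pi_j$ and $\pi_j$ is orthogonal to the remaining $\pi_i$'s, replacing $\pi_j$ by the pair $\rho$ and $\pi_j - \rho$ yields a strictly finer orthogonal decomposition with $k+1$ nonzero summands. The bound $k \leq N$ forces this procedure to terminate after finitely many steps, at which point every summand $(X, \pi_j)$ is indecomposable, yielding the desired decomposition.

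The only conceptual ingredient is the nilpotence principle, which is exactly what transfers the counting bound from $\bar F$ back to $F$; the rest—passage to a splitting field, elementary counting in finite rings, and the refinement step—is routine. The main point requiring care is that each refinement preserves orthogonality with all previously existing $\pi_i$'s, but this is immediate from the containment $\rho \in \pi_j \End(M(X)) \pi_j$.
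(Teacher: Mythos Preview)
Your proof is correct and follows essentially the same approach as the paper: finiteness of $\End(M(X)_{\bar F})$ together with the nilpotence principle (a nonzero idempotent cannot lie in the nil kernel) forces any refinement process to terminate. The paper phrases the termination via a strictly descending chain of projectors $\pi_0,\pi_1,\dots$ with $\pi_i\circ\pi_j=\pi_j$ for $i<j$, rather than via a bound on orthogonal families, but the two formulations are interchangeable (the differences $\pi_i-\pi_{i+1}$ form exactly such an orthogonal family).
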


\begin{proof}
If $(X,\pi)$ does not decompose this way, we get an infinite sequence
$$\pi_0\!=\!\pi,\; \pi_1,\;\pi_2,\;\dots\;\;\in\End(M(X))$$
of pairwise distinct projectors such that
$\pi_i\compose \pi_j=\pi_j=\pi_j\compose \pi_i$ for any $i<j$.

Let $\bar{F}/F$ be a splitting field of $X$.
Since the ring $\End(M(X)_{\bar{F}})$ is finite, we have $(\pi_i)_{\bar{F}}=(\pi_j)_{\bar{F}}$ for
some $i<j$.
The difference $\pi_i-\pi_j$ is nilpotent and idempotent, therefore $\pi_i=\pi_j$.
\end{proof}

A (not necessarily commutative) ring is called {\em local}, if the sum of any
two non-invertible elements differs from $1$ in the ring.
Since the sum of two nilpotents is never $1$, we have

\begin{lemma}
\label{asm}
A ring, where each non-invertible element is nilpotent, is local.
In particular, by Corollary \ref{cor-finite}, so is the ring
$\End(M(X))$, if $\Lambda$ is finite and $X$ is a geometrically split variety satisfying the nilpotence principle and such that the motive $M(X)$ is indecomposable.
\qed
\end{lemma}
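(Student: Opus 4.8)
The plan is to separate the statement into its purely ring-theoretic core and the motivic consequence drawn from Corollary~\ref{cor-finite}. First I would prove the abstract assertion: if $R$ is a (not necessarily commutative) ring in which every non-invertible element is nilpotent, then $R$ is local. Let $x,y\in R$ be non-invertible; by hypothesis both are nilpotent, and I must show $x+y\neq1$. Suppose $x+y=1$, that is, $y=1-x$. Writing $x^n=0$ for some $n\geq1$, the element $1-x$ has $1+x+\dots+x^{n-1}$ as a two-sided inverse, an identity valid in any ring since $x$ commutes with its own powers; hence $y$ would be a unit, contradicting that $y$ is a non-invertible nilpotent element (a nilpotent element is never a unit in a nonzero ring). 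Therefore $x+y\neq1$, which is precisely the definition of $R$ being local.

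Second, I would verify that, under the stated hypotheses, $R=\End(M(X))$ belongs to this class of rings. Let $f\in\End(M(X))$ be non-invertible. By Corollary~\ref{cor-finite} some power $f^N$ is a projector, and since $M(X)$ is indecomposable its only projectors are $0$ and $\id$. The case $f^N=\id$ is impossible, as it would make $f$ invertible with inverse $f^{N-1}$; hence $f^N=0$ and $f$ is nilpotent. Thus every non-invertible endomorphism of $M(X)$ is nilpotent, and the first part applies to give that $\End(M(X))$ is local.

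I expect no genuine obstacle: both halves are short formal arguments. The single point deserving attention is non-commutativity, which forbids invoking ``the sum of two nilpotents is nilpotent'' (false in general) and forces one to route the argument through the unit $1-x$, an identity that needs no commutativity hypothesis. The identification of the projectors of an indecomposable motive with $\{0,\id\}$ is immediate from the definition of indecomposability, and everything else is routine.
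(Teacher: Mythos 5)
Your proof is correct and follows essentially the same route as the paper: the paper's entire argument for the ring-theoretic half is the one-line remark that the sum of two nilpotents can never equal $1$, which you simply unfold into the explicit geometric-series inverse of $1-x$; and the motivic consequence is drawn, as you do, by combining Corollary~\ref{cor-finite} with the observation that the only projectors on an indecomposable object are $0$ and $\id$.
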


A {\em complete decomposition} of an object in an additive category is a finite
direct sum decomposition with indecomposable summands.

\begin{thm}[{\cite[Theorem 3.6 of Chapter I]{MR0249491}}]
\label{bass}
Let $M$ be an object of a pseudo-abelian category which is a direct sum of a finite number of indecomposable
objects having {\em local} endomorphism rings.
Then any finite direct sum decomposition of $M$ can be refined to a complete one, and
there is only one (up to a permutation of the summands) complete decomposition of $M$.
\end{thm}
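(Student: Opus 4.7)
The plan is to derive the theorem from a single Krull--Schmidt style exchange lemma: under the hypotheses, if $P$ is any indecomposable direct summand of $M$, then $P$ is isomorphic to some $N_i$ and one can actually substitute $P$ for $N_i$ in the decomposition, that is, $M=P\oplus\bigoplus_{j\neq i}N_j$. Both the existence assertion and the uniqueness assertion of the theorem then follow by iterating this lemma.

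To prove the exchange lemma, write $\iota_i,\pi_i$ for the structural maps of $N_i$ and $\iota,\pi$ for those of $P$, so that $\sum_i\iota_i\pi_i=\mathrm{id}_M$ and $\pi\iota=\mathrm{id}_P$. Set $f_i=\pi_i\iota\colon P\to N_i$ and $g_i=\pi\iota_i\colon N_i\to P$; sandwiching the first relation between $\pi$ and $\iota$ yields $\sum_i g_if_i=\mathrm{id}_P$ in $\End(P)$. Since this ring is local, at least one summand must be a unit; after reindexing, $g_1f_1$ is invertible with inverse $h$. A direct computation (using $h(g_1f_1)=(g_1f_1)h=\mathrm{id}_P$) shows that $e:=f_1hg_1\in\End(N_1)$ is idempotent, and $e=0$ is excluded because $g_1\circ e=g_1$, so $e=0$ would force $g_1=0$ and contradict $g_1f_1$ being a unit. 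By locality of $\End(N_1)$, whose only idempotents are $0$ and $1$, we get $e=\mathrm{id}_{N_1}$, which exhibits $f_1$ and $g_1$ as mutually inverse isomorphisms between $P$ and $N_1$.

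For the replacement half, the key observation is that the endomorphism $\phi:=\iota g_1\pi_1+\sum_{j\neq 1}\iota_j\pi_j\in\End(M)$ differs from $\mathrm{id}_M$ by $(\iota g_1-\iota_1)\pi_1$, and using $\pi_1\iota g_1=f_1g_1=\mathrm{id}_{N_1}$ one checks that this difference squares to zero. Hence $\phi$ is an automorphism of $M$; it carries the summand $\iota_1(N_1)$ to $\iota(P)=P$ and fixes each $\iota_j(N_j)$ for $j\neq 1$, so transporting the original decomposition by $\phi$ gives exactly $M=P\oplus\bigoplus_{j\neq 1}N_j$.

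With the exchange lemma in hand, both conclusions of the theorem follow by induction. For existence, any finite decomposition $M=A_1\oplus\dots\oplus A_m$ can be refined by repeatedly splitting non-indecomposable summands; the total number of indecomposable pieces in any such refinement is bounded by $k$ via repeated exchange, so the process terminates. For uniqueness, given another complete decomposition $M=P_1\oplus\dots\oplus P_\ell$, apply the lemma to $P_1$ to match it with some $N_{i_1}$ and substitute; iterating against $P_2,\dots,P_\ell$ and the remaining $N_j$ produces both $k=\ell$ and a bijection of isomorphism classes. The main obstacle, which I expect to be the most delicate step, is the replacement half of the exchange lemma: identifying $P$ with $N_i$ up to abstract isomorphism is straightforward once the locality hypothesis is unpacked, but producing the explicit automorphism $\phi$ that actually substitutes $P$ for $N_i$ in the decomposition is where the real work lies.
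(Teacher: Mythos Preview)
The paper does not give its own proof of this theorem; it is quoted from Bass's book without argument. Your outline follows the classical Krull--Schmidt strategy, but as written it contains two genuine gaps.

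First, in your exchange lemma you write ``since this ring is local'' about $\End(P)$. The hypothesis of the theorem only tells you that the $\End(N_i)$ are local; for an arbitrary indecomposable summand $P$ of $M$ (in particular for $P=P_1$ coming from a second complete decomposition, or for an indecomposable piece produced while refining an arbitrary decomposition) locality of $\End(P)$ is not given and is essentially part of the conclusion. The standard cure is to run the argument from the other side: sandwich $\mathrm{id}_{N_1}$ (not $\mathrm{id}_P$) between the structural maps of the $P$-decomposition, use locality of $\End(N_1)$ to find an index $j$ with $(\pi_1\iota'_j)(\pi'_j\iota_1)$ invertible, and then use indecomposability of $P_j$ (not locality of its endomorphism ring) to conclude that the resulting split monomorphism $N_1\hookrightarrow P_j$ is an isomorphism.

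Second, from $e=f_1hg_1=\mathrm{id}_{N_1}$ you deduce that ``$f_1$ and $g_1$ are mutually inverse''. What actually follows is that $f_1$ and $hg_1$ are mutually inverse; you only know $g_1f_1$ is a unit, not that it equals $\mathrm{id}_P$, so $f_1g_1$ need not be $\mathrm{id}_{N_1}$. This matters, because your nilpotence check for $\phi-\mathrm{id}_M$ uses $\pi_1\iota g_1=f_1g_1=\mathrm{id}_{N_1}$, which is exactly the unjustified equality. The fix is immediate: build $\phi$ with $hg_1$ in place of $g_1$, i.e.\ set $\phi=\iota(hg_1)\pi_1+\sum_{j\neq1}\iota_j\pi_j$; then $\pi_1\iota(hg_1)=f_1hg_1=\mathrm{id}_{N_1}$ and your square-zero computation goes through verbatim.
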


To be precise, the uniqueness part of Theorem \ref{bass} states that if
$$
M=M_1\oplus\dots\oplus M_m=N_1\oplus\dots\oplus N_n
$$
are two complete decompositions of $M$, then $m=n$ and there exists a permutation
$\sigma$ of the set $\{1,2,\dots,n\}$ such that $M_i\simeq N_{\sigma(i)}$ for any $i$.
The isomorphism here is meant to be an isomorphism of {\em abstract} objects: in general, there is no
such isomorphism respecting the embeddings into $M$.
Later on, when we speak of ``isomorphism of summands'' of a motive,
we always mean an isomorphism of abstract motives between the summands.

We say that the {\em Krull-Schmidt principle} holds for a given object of a given  additive category,
if every direct sum decomposition of the object can be refined to a complete one (in particular,
a complete decomposition exists) and there is only one (up to a permutation of the summands) complete decomposition of the object.
In the sequel, we are constantly using the following statement which is an immediate
consequence of
Lemmas \ref{exists} and \ref{asm}
and Theorem \ref{bass}:

\begin{cor}[{cf.\! \cite[Corollary 35]{MR2264459}}]
\label{krull-schmidt}
Assume that the coefficient ring $\Lambda$ is finite.
The Krull-Schmidt principle holds for any shift of any summand of the motive of any
geometrically split $F$-variety satisfying the nilpotence principle.
In other words,
the Krull-Schmidt principle holds for the objects of the pseudo-abelian Tate subcategory in
$\CM(F,\Lambda)$
generated by the motives of the geometrically split $F$-varieties satisfying the
nilpotence principle.
\qed
\end{cor}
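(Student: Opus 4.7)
The plan is to verify the two hypotheses of Theorem \ref{bass} --- existence of a finite indecomposable decomposition and locality of the endomorphism rings of the indecomposable summands --- for an arbitrary object of the pseudo-abelian Tate subcategory in question, and then invoke that theorem.

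First I would unwind the definition: any object $M$ of this subcategory is a direct summand of some motive of the form $M_0 = \bigoplus_i M(X_i)(n_i)$ with each $X_i$ geometrically split and satisfying the nilpotence principle. Taking the disjoint union $X = \coprod_i X_i$, one checks that $M_0$ is itself geometrically split (over the splitting field of $X$ it is a finite sum of Tate motives) and still obeys the nilpotence principle (since both properties are preserved under finite direct sums and Tate twists). Consequently the proofs of Corollary \ref{cor-finite} and Lemma \ref{exists} apply to $M_0$ in place of a single $M(X)$, and in particular $M$ decomposes as $M = M_1 \oplus \cdots \oplus M_r$ with each $M_i$ indecomposable.

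The main point --- and the only place where a small amount of care is required --- is to show that $\End(M_i)$ is local for each $i$. By the first sentence of Lemma \ref{asm}, it suffices to prove that every non-invertible $f \in \End(M_i)$ is nilpotent. Here is how I would do it: extend $f$ by zero on the complementary summand of $M_0$ to obtain $\tilde f \in \End(M_0)$; apply Corollary \ref{cor-finite} (in its form for $M_0$) to get an integer $N$ with $\tilde f^{\,N}$ a projector; this projector vanishes on the complement and therefore lies in $\End(M_i)$. Indecomposability of $M_i$ forces $\tilde f^{\,N}$ to equal either $0$ or $\id_{M_i}$, which gives nilpotence of $f$ in the first case and invertibility (with inverse $f^{N-1}$) in the second.

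With the hypotheses of Theorem \ref{bass} verified, that theorem yields both the refinement property for arbitrary finite direct sum decompositions and the uniqueness up to permutation of the complete decomposition --- which is precisely the Krull-Schmidt principle. I expect no serious obstacle in executing this plan; the argument is a formal assembly of the three cited statements, the only mild subtlety being the passage from a single geometrically split variety to the finite direct sums of shifts required to describe an arbitrary object of the Tate subcategory, which is neatly handled by taking disjoint unions.
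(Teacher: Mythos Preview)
Your proposal is correct and follows exactly the route the paper indicates: the corollary is stated as an immediate consequence of Lemma~\ref{exists}, Lemma~\ref{asm}, and Theorem~\ref{bass}, and you have spelled out precisely how these three pieces fit together. The only point I would tighten is the assertion that the nilpotence principle is ``preserved under finite direct sums and Tate twists'': the cleanest way to see this is to realize $M_0=\bigoplus_i M(X_i)(n_i)$ as a summand of $M\big((\coprod_i X_i)\times\mathbb{P}^N\big)(m)$ for suitable $N,m$, so that one is back to a single geometrically split variety satisfying the nilpotence principle and the cited lemmas apply verbatim.
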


\begin{rem}
\label{reduced motives}
Replacing the Chow
groups $\Ch(-)$ by the {\em reduced} Chow groups
$\BCh(-)$ (cf.\! \cite[\S72]{EKM})
in the definition of the category $\CM(F,\Lambda)$,
we get a ``simplified'' motivic category $\BCM(F,\Lambda)$
(which is still sufficient for the main purpose of this paper).
Working within this category, we do not need the nilpotence principle any more.
In particular, the Krull-Schmidt principle holds (with a simpler proof)
for the shifts of the summands of
the motives of the geometrically split $F$-varieties.
\end{rem}

\subsection
{Upper, lower, and outer summands}

We assume here that the coefficient ring $\Lambda$ is connected.
We shall often assume that $\Lambda$ is finite.

Given a correspondence,
an element $\alpha\in\Ch_{\dim X}(X\times Y)$ of the Chow group of the product of smooth complete irreducible varieties
$X$ and $Y$, we write $\mult\alpha\in\Lambda$ for the {\em multiplicity} (or {\em multiplicity over the first factor})
of $\alpha$,
\cite[definition of \S75]{EKM}.
Multiplicity of a composition of two correspondences is the product of multiplicities of the composed
correspondences (cf.\! \cite[Corollary 1.7]{MR1751923}).
In particular, the multiplicity of a projector is idempotent and therefore $\in\{0,1\}$ because the coefficient
ring $\Lambda$ is connected.

\begin{lemma}
\label{lemma left outer}
Let  $X$ be a smooth complete irreducible variety.
Let $M$ be a summand of the motive of $X$ and let
$\pi\in\Ch_{\dim X}(X\times X)$ be the projector giving $M$.
The following four conditions on $M$ are equivalent:
\begin{itemize}
\item
$\Ch^0(M)\ne 0$;
\item
the summand $\Ch^0(M)$ of  the group ($\Lambda$-module) $\Ch^0(X)$ coincides with
the whole $\Ch^0(X)$;
\item
$\mult\pi\ne0$;
\item
$\mult\pi=1$.
\end{itemize}
\end{lemma}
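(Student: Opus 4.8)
The plan is to reduce all four conditions to one numerical fact: since $X$ is irreducible, $\Ch^0(X)=\Lambda\cdot[X]$ is free of rank one over $\Lambda$, and $\mult\pi$ is precisely the scalar by which $\pi$ acts on this module. Everything else will be a formal consequence of $\Lambda$ being connected.

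First I would record how $\Ch^0(M)$ sits inside $\Ch^0(X)$. Applying the additive realization functor $\Ch^0$ to the idempotent $\pi\in\End M(X)$, one identifies $\Ch^0(M)$ with the image of the induced $\Lambda$-linear idempotent on $\Ch^0(X)=\Lambda\cdot[X]$; being $\Lambda$-linear on a free rank-one module, this idempotent is multiplication by some $e\in\Lambda$, so that $\Ch^0(M)=e\cdot\Lambda\cdot[X]$ and the complementary summand satisfies $\Ch^0\big((X,\Delta_X-\pi)\big)=(1-e)\cdot\Lambda\cdot[X]$, exhibiting the direct sum decomposition of $\Ch^0(X)$ referred to in condition~(2).

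The key step is to check that $e=\mult\pi$. For this I would compute the action of $\pi$ on the generator $[X]$ straight from the definition of the realization: it is $p_{1*}\big(\pi\cdot p_2^*[X]\big)=p_{1*}(\pi)$, where $p_1,p_2\colon X\times X\to X$ are the two projections. Writing $\pi$ as a $\Lambda$-combination of classes of integral subvarieties $Z\subseteq X\times X$ of dimension $\dim X$, one has $p_{1*}[Z]=\deg(Z/X)\cdot[X]$ when $Z$ dominates the first factor and $p_{1*}[Z]=0$ otherwise; summing these contributions gives $p_{1*}(\pi)=(\mult\pi)\cdot[X]$, which is nothing but the definition of the multiplicity over the first factor, see \cite[\S75]{EKM}. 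Hence $e=\mult\pi$ and $\Ch^0(M)=(\mult\pi)\cdot\Lambda\cdot[X]$. I do not foresee a genuine obstacle; the only step asking for a moment's care is this identification of ``action on the fundamental class'' with ``multiplicity'', and it is routine once the definitions of \cite[\S64]{EKM} and \cite[\S75]{EKM} are unwound.

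Finally I would deduce the equivalences. As already observed just before the statement, $\mult\pi$ is idempotent in $\Lambda$ and hence, $\Lambda$ being connected, lies in $\{0,1\}$. If $\mult\pi=1$ then $\Ch^0(M)=\Lambda\cdot[X]=\Ch^0(X)$, which is non-zero since $\Lambda\ne 0$; if $\mult\pi=0$ then $\Ch^0(M)=0$. This closes the circle of implications $\mult\pi\ne0\Rightarrow\mult\pi=1\Rightarrow\big(\Ch^0(M)=\Ch^0(X)\big)\Rightarrow\Ch^0(M)\ne0\Rightarrow\mult\pi\ne0$, so the four conditions are equivalent.
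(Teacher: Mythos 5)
Your proof is correct and follows essentially the same route as the paper's one-line argument: identify $\Ch^0(M)=\Hom(M,\Lambda)$ with the image of the endomorphism of $\Ch^0(X)=\Lambda\cdot[X]$ induced by $\pi$, check that this endomorphism is multiplication by $\mult\pi$, and then use that $\mult\pi$ is an idempotent of the connected ring $\Lambda$. The only difference is that you spell out the computation $p_{1*}(\pi\cdot p_2^*[X])=(\mult\pi)\cdot[X]$ that the paper leaves implicit.
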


\begin{proof}
The group $\Ch^0(M)$,
defined as $\Hom(M,\Lambda)$,
is the image of the endomorphism
of the group $\Ch^0(X)=\Lambda\cdot[X]$ given by the multiplication by $\mult \pi$.
\end{proof}

The dual version of Lemma \ref{lemma left outer} reads as follows:

\begin{lemma}
\label{lemma right outer}
Let $X$ be a smooth complete irreducible variety.
Let $M$ be a summand of the motive of $X$ and let
$\pi\in\Ch_d(X\times X)$, where $d=\dim X$, be the projector giving $M$.
The following four conditions on $M$ are equivalent:
\begin{itemize}
\item
$\Ch_d(M)\ne 0$;
\item
the summand $\Ch_d(M)$ of  the group ($\Lambda$-module) $\Ch_d(X)$ coincides with
the whole $\Ch_d(X)$;
\item
$\mult\pi^t\ne0$;
\item
$\mult\pi^t=1$,
\end{itemize}
where $\pi^t$ is the transpose of $\pi$.
\end{lemma}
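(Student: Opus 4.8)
The plan is to deduce this directly from Lemma \ref{lemma left outer} by dualizing. Recall that the duality functor on $\CM(F,\Lambda)$ sends a summand $M=(X,\pi,0)$ of the motive of an irreducible variety $X$ of dimension $d$ to the summand $M^*=(X,\pi^t,-d)$, and that under this functor $\Ch_d(M)$ is identified with $\Ch^0(M^*(d))=\Ch^0((X,\pi^t,0))$. More concretely, and avoiding even the functoriality statement: the group $\Ch_d(M)$, defined as $\Hom(\Lambda(d),M)$, is the image of the endomorphism of $\Ch_d(X)=\Lambda\cdot[\Delta\text{-point}]$... — better, of $\Ch_d(X)$, which is the free $\Lambda$-module on the class of a rational point's closure, wait, $\Ch_d(X)=\Ch_0(X)$ is not generally free; so the cleaner route is the first one.

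First I would record the general fact that $\mult(\alpha^t)$, for $\alpha\in\Ch_d(X\times X)$, equals the multiplicity of $\alpha$ computed over the \emph{second} factor, i.e. the degree of the $0$-cycle obtained by intersecting $\alpha$ with $\{x\}\times X$ for a rational (or generic) point; equivalently $\mult(\alpha^t)=\mult(\sigma_*\alpha)$ where $\sigma$ is the exchange of factors. Then I would observe that the four conditions in Lemma \ref{lemma right outer} for $(X,\pi)$ are precisely the four conditions of Lemma \ref{lemma left outer} applied to the projector $\pi^t$, which gives the summand $(X,\pi^t,0)$: indeed $\Ch^0((X,\pi^t,0))$ is the image of multiplication by $\mult(\pi^t)$ on $\Ch^0(X)$, so "$\mult\pi^t\ne0$" $\iff$ "$\mult\pi^t=1$" $\iff$ "$\Ch^0((X,\pi^t))=\Ch^0(X)$" $\iff$ "$\Ch^0((X,\pi^t))\ne0$", by Lemma \ref{lemma left outer}. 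It remains only to match "$\Ch^0((X,\pi^t))\ne0$" with "$\Ch_d(M)\ne0$" and the corresponding "coincides with the whole group" statements; this is the duality identification $\Ch_d(X,\pi,0)\cong\Ch^0(X,\pi^t,0)$, which follows from the computation $\Hom(\Lambda(d),(X,\pi,0))\cong\Hom((X,\pi,0)^{\vee},\Lambda(-d)(d))\cong\Ch^0((X,\pi^t,0))$, together with the compatibility of these isomorphisms with the inclusions into $\Ch_d(X)$ resp. $\Ch^0(X)$.

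Alternatively, and perhaps more transparently, I can simply repeat the one-line proof of Lemma \ref{lemma left outer} in transposed form: the group $\Ch_d(M)$, defined as $\Hom_{\CM(F,\Lambda)}(\Lambda(d),M)$, is cut out inside $\Ch_d(X)=\Hom(\Lambda(d),M(X))$ as the image of the idempotent endomorphism $\pi_*$ of $\Ch_d(X)$; and for a correspondence $\pi\in\Ch_d(X\times X)$ the action $\pi_*$ on $\Ch_d(X)=\Ch_0(X)$ coincides (via the degree map $\Ch_0(X)\to\Lambda$, which is injective on the image of $\pi_*$ since that image is a submodule of a rank-one situation after base change to a splitting field... ) — this is getting delicate, so the hard part, and the step I expect to be the main obstacle, is exactly the bookkeeping: pinning down precisely why the degree-$d$ analogue of "multiplication by $\mult\pi$ on $\Ch^0(X)=\Lambda\cdot[X]$" is "multiplication by $\mult\pi^t$", given that $\Ch_d(X)=\Ch_0(X)$ need not be $\Lambda$-free. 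I expect the honest fix is to push everything through duality: the motive $(X,\pi,0)^{\vee}$ is $(X,\pi^t,-d)$, duality is an (additive, contravariant) equivalence, so $\Ch_d(X,\pi,0)=\Hom(\Lambda(d),(X,\pi,0))\cong\Hom((X,\pi,0)^{\vee},\Lambda(d)^{\vee})=\Hom((X,\pi^t,-d),\Lambda(-d))=\Hom((X,\pi^t,0),\Lambda)=\Ch^0(X,\pi^t,0)$, naturally in a way respecting the summand inclusions into $\Ch_d(X)\cong\Ch^0(X)^{\vee}$-type identifications; and then invoke Lemma \ref{lemma left outer} for $\pi^t$ verbatim. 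With that identification in place the four equivalences transport immediately.
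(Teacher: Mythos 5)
The obstacle you spend most of the proposal worrying about does not exist, because you have misread the grading. The lemma involves $\Ch_d(M)$ with $d=\dim X$, and in homological indexing $\Ch_d(X)$ is the group of $d$-\emph{dimensional} cycle classes on the $d$-dimensional variety $X$; for irreducible $X$ this is $\Ch^0(X)=\Lambda\cdot[X]$, a free $\Lambda$-module of rank one, and \emph{not} the zero-cycle group $\Ch_0(X)=\Ch^d(X)$. Once this is straightened out, the paper's own proof is the exact transposed one-liner of Lemma~\ref{lemma left outer}: $\Ch_d(M)=\Hom(\Lambda(d),M)$ is the image of the induced endomorphism of $\Ch_d(X)=\Ch^0(X)=\Lambda\cdot[X]$, and that endomorphism is multiplication by $\mult\pi^t$ (push-forward of $\pi$ to the second factor applied to $[X]$ is the degree of $\pi$ over the second factor, i.e.\ $\mult\pi^t$, times $[X]$). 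No freeness issue arises and no duality is needed.

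Your fallback route through the duality equivalence $(X,\pi,0)^{\vee}\simeq(X,\pi^t,-d)$ and the chain $\Ch_d(X,\pi,0)\cong\Hom((X,\pi^t,0),\Lambda)=\Ch^0(X,\pi^t,0)$, followed by Lemma~\ref{lemma left outer} applied to $\pi^t$, is correct and would give the statement; it is just heavier machinery than the situation requires. The upshot: your proof is salvageable and the duality version is valid, but the ``hard part'' you flagged was an artifact of confusing $\Ch_d(X)$ with $\Ch_0(X)$; the intended argument is the short direct one.
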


\begin{proof}
The group $\Ch_d(M)$,
defined as $\Hom(\Lambda(d),M)$,
is the image of the endomorphism
of the same group
$\Ch_d(X)=\Ch^0(X)=\Lambda\cdot[X]$ given by the multiplication by $\mult \pi^t$.
\end{proof}

The following definition is extending some terminology (concerning the summands of the motives of quadrics) of
\cite{Vishik-IMQ}.

\begin{dfn}
\label{def-outer}
Let $M\in\CM(F,\Lambda)$ be a summand of the motive of a smooth complete
irreducible variety.
The summand $M$ is called {\em upper}, if it satisfies the four equivalent conditions of Lemma \ref{lemma left outer}.
The summand $M$ is called {\em lower}, if it satisfies the four equivalent conditions of Lemma \ref{lemma right outer}.
The summand $M$ is called {\em outer}, if it is upper and lower simultaneously.
\end{dfn}

For instance, the whole motive of a smooth complete irreducible
variety is an outer summand of itself.
Of course, a projector $\pi$ determines an upper summand if and only if the summand given by $\pi^t$ is lower.
Since the multiplicity does not change under extensions of scalars, a summand $M$ is upper (or lower, or outer)
if and only if the summand $M_E$ is so for some field extension $E/F$.

\begin{lemma}
\label{upper and 0-cycles}
Let $X$ be a smooth complete irreducible variety such that the degree homomorphism
$\deg:\Ch_0(X)\to\Lambda$ is an isomorphism.
The following three conditions on
a summand $M$ of $M(X)\in\CM(F,\Lambda)$ are equivalent:
\begin{itemize}
\item
$M$ is upper;
\item
the summand $\Ch_0(M)$ of the group ($\Lambda$-module)
$\Ch_0(X)$ coincides with the whole $\Ch_0(X)$;
\item
$\Ch_0(M)\ne0$.
\end{itemize}
\end{lemma}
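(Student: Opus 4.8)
The plan is to express $\Ch_0(M)$ through the multiplicity of the projector cutting out $M$ and then appeal to Lemma~\ref{lemma left outer}. Write $d=\dim X$ and let $\pi\in\Ch_d(X\times X)$ be the projector with $M=(X,\pi)$. Exactly as in the proofs of Lemmas~\ref{lemma left outer} and~\ref{lemma right outer}, the group $\Ch_0(M)$, defined as $\Hom(\Lambda(0),M)$, is the image of the endomorphism $\pi_*$ of $\Ch_0(X)$ given by the correspondence $\pi$; since $\pi$ is a projector, $\Ch_0(M)=\pi_*\Ch_0(X)$ is a direct summand of $\Ch_0(X)$.

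The one computation of substance is the identity $\deg(\pi_*x)=(\mult\pi)\cdot\deg(x)$ for all $x\in\Ch_0(X)$. I would prove it by the projection formula: pushing a zero-cycle down to $\Spec F$ factors through either of the projections $p_1,p_2\colon X\times X\to X$, so that $\deg(\pi_*x)=\deg\bigl((p_1)_*(\pi\cdot p_1^*x)\bigr)=\deg\bigl((p_1)_*(\pi)\cdot x\bigr)$, where the middle step is the projection formula and $(p_1)_*(\pi)=(\mult\pi)\,[X]$ by the definition of the multiplicity over the first factor; the claim then follows from $[X]\cdot x=x$. The point to watch --- the place where the argument could go astray --- is that $\Ch_0$ of a motive must be computed via the covariant action $\pi_*$, and a small check is needed that this brings in $\mult\pi$ and not $\mult\pi^t$ (in contrast to Lemma~\ref{lemma right outer}, whose dual group $\Ch_d$ does involve $\mult\pi^t$). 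Beyond this bookkeeping there is no real difficulty.

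Granting the identity, the rest is immediate. Since $\deg\colon\Ch_0(X)\to\Lambda$ is an isomorphism, it carries $\pi_*$ to multiplication by $\mult\pi$ on $\Lambda$, hence carries the summand $\Ch_0(M)$ onto the ideal $(\mult\pi)\Lambda$ of $\Lambda$. Because $\pi$ is a projector and $\Lambda$ is connected, $\mult\pi\in\{0,1\}$ (as already observed in the text); thus $\Ch_0(M)$ equals either $0$ or the whole of $\Ch_0(X)$, and it is nonzero precisely when $\mult\pi=1$. Finally $\mult\pi=1$ is, by Lemma~\ref{lemma left outer}, equivalent to $M$ being upper, and chaining these equivalences yields the three conditions in the statement. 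Note that the isomorphism hypothesis on $\deg$ enters only here, and it is exactly what rules out a nonzero kernel of $\deg$ that could otherwise trap $\Ch_0(M)$ strictly between $0$ and $\Ch_0(X)$.
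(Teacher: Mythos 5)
Your proposal is correct and follows essentially the same route as the paper: the paper's proof simply asserts that $\Ch_0(M)=\Hom(\Lambda,M)$ is the submodule of $\Ch_0(X)$ generated by $(\mult\pi)\cdot x$ for a degree-one $0$-cycle class $x$, and then the equivalences follow from Lemma~\ref{lemma left outer}; you merely flesh out that assertion with the projection-formula computation $\deg(\pi\circ x)=(\mult\pi)\deg(x)$, which is a harmless expansion of what the paper leaves implicit.
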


\begin{proof}
Let $\pi\in\Ch_{\dim X}(X\times X)$ be the projector giving $M$.
Let $x$ be an element of $\Ch_0(X)$ with $\deg x=1\in\Lambda$.
In particular, $x$ is a generator of the $\Lambda$-module $\Ch_0(X)$.
Then the $\Lambda$-module $\Ch_0(M)=\Hom(\Lambda,M)$ coincides with the submodule of
$\Ch_0(X)$ generated by $(\mult\pi)\cdot x$.
\end{proof}

Replacing ``upper'' by ``lower'' and $\Ch_0$ by $\Ch^{\dim X}$ (in all 5 appearances)
in Lemma \ref{upper and 0-cycles}, we get
the dual version of it.

To characterize the upper (or lower) summands in the split case
is very easy:

\begin{lemma}
\label{tate outer}
Assume that a summand $M$ of the motive of a smooth complete irreducible
variety of dimension $d$ decomposes into a sum of Tate motives.
Then $M$ is upper if and only if the Tate motive $\Lambda$ is present in the
decomposition;
it is lower if and only if the Tate motive $\Lambda(d)$ is present in the decomposition.
\end{lemma}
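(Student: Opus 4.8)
The plan is to reduce the statement to the definitions of ``upper'' and ``lower'' via Lemmas~\ref{lemma left outer} and~\ref{lemma right outer}, together with the explicit description of projectors on a split motive. First I would fix a complete decomposition $M\simeq\bigoplus_{i}\Lambda(n_i)$ into Tate motives, with the $n_i$ lying between $0$ and $d$ since $M$ is a summand of the motive of a $d$-dimensional variety. Writing $\pi\in\Ch_d(X\times X)$ for the projector defining $M$ inside $M(X)$, I would recall that over the split situation the Chow group $\Ch(X\times X)$ has a basis of products of the ``cellular'' generators, so that $\pi$ is a sum of the rank-one projectors $h^{n_i}\times h_{n_i}$ (in the notation where $h^j\in\Ch^j(X)$, $h_j\in\Ch_j(X)$ are dual bases). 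The multiplicity $\mult\pi$ is then the coefficient of the term of bidegree $(0,d)$, i.e.\ of $[X]\times h_d=h^0\times h_d$, which is nonzero precisely when some $n_i=0$, i.e.\ when the Tate motive $\Lambda=\Lambda(0)$ occurs as a summand of $M$.

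Next I would invoke Lemma~\ref{lemma left outer}: $M$ is upper if and only if $\mult\pi\ne0$, which by the previous paragraph happens if and only if $\Lambda$ appears in the decomposition of $M$. For the ``lower'' half I would pass to the transpose: $\mult\pi^t$ is the coefficient of $h^d\times h_0$ in $\pi$, equivalently the coefficient of $h^0\times h_d$ in $\pi^t$, which is nonzero if and only if some $n_i=d$, i.e.\ if and only if $\Lambda(d)$ occurs in $M$. Then Lemma~\ref{lemma right outer} gives that $M$ is lower precisely in that case. Alternatively, and perhaps more cleanly, I would observe that for split motives the functors $\Ch^0(-)$ and $\Ch_d(-)$ simply read off the multiplicities of $\Lambda$ and of $\Lambda(d)$ respectively: $\Ch^0(\Lambda(j))\ne0$ iff $j=0$ and $\Ch_d(\Lambda(j))\ne0$ iff $j=d$, since $\Ch^0(\Lambda(j))=\Hom(\Lambda(j),\Lambda(0))$ and $\Ch_d(\Lambda(j))=\Hom(\Lambda(d),\Lambda(j))$, and between distinct Tate motives all Hom-groups in the relevant degrees vanish. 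Then the first bullet of Lemma~\ref{lemma left outer} ($\Ch^0(M)\ne0$) and the first bullet of Lemma~\ref{lemma right outer} ($\Ch_d(M)\ne0$) translate directly into the presence of $\Lambda$, resp.\ $\Lambda(d)$.

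The only genuine content to be careful about is the elementary computation of $\Hom$-groups between Tate motives in a graded Chow-motive category and the bookkeeping of which Tate twists can occur in a summand of a $d$-dimensional variety's motive; both are standard (see \cite[\S64]{EKM}), so I would state them briefly rather than belabor them. I do not expect any real obstacle here: the statement is essentially a sanity check that the intrinsic notions of Definition~\ref{def-outer} specialize, in the split case, to the obviously expected combinatorial conditions. The mild subtlety is purely notational --- making sure that ``multiplicity over the first factor'' is paired with $\Ch^0$ (hence the Tate summand $\Lambda$) and its transpose with $\Ch_d$ (hence $\Lambda(d)$), and not the other way around --- and this is resolved simply by writing $\pi=\sum_i h^{n_i}\times h_{n_i}$ and reading off the two extreme bidegrees.
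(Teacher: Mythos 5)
Your proposal is correct, and your "alternatively, and perhaps more cleanly" paragraph is exactly the paper's one-line proof: it simply observes that $\Ch^0(\Lambda(i))\ne0$ iff $i=0$ and $\Ch_d(\Lambda(i))\ne0$ iff $i=d$, then invokes the first bullets of Lemmas~\ref{lemma left outer} and~\ref{lemma right outer}. The preliminary computation you sketch via dual bases and the coefficient of $h^0\times h_d$ in the projector is a valid but heavier detour, landing at the same place; the Hom-group observation is the intended (and shorter) argument.
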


\begin{proof}
For any $i\in\Z$ we have:
$\Ch^0\big(\Lambda(i)\big)\ne0$ if and only if $i=0$;
$\Ch_d\big(\Lambda(i)\big)\ne0$ if and only if $i=d$.
\end{proof}

\begin{rem}
\label{outer unique}
Assume that the coefficient ring $\Lambda$ is finite.
Let $X$ be an irreducible  geometrically split
variety satisfying the nilpotence principle.
Then a complete motivic decomposition of $X$ contains precisely one upper
summand and it follows by
Corollary \ref{krull-schmidt}
that an upper indecomposable summand of $M(X)$ is unique up to
an isomorphism (of motives, not of summands).
Of course, the same is true for the lower summands.
\end{rem}

\begin{lemma}
\label{variacija}
Assume that the coefficient ring is finite.
Let $X$ be an irreducible geometrically split variety satisfying
the nilpotence principle.
Let $M$ be a motive.
Assume that there exist morphisms $\alpha:M(X)\to M$ and $\beta:M\to M(X)$ such that
$\mult(\beta\compose\alpha)=1$.
Then the indecomposable upper summand of $M(X)$ is isomorphic to a summand of $M$.
\end{lemma}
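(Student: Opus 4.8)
The plan is to turn the hypothesis into a genuine projector on $M(X)$, recognize the resulting upper summand of $M(X)$, and then carry it over to $M$ by a formal computation with idempotents. First I would set $f:=\beta\compose\alpha\in\End(M(X))$. By Corollary~\ref{cor-finite} there is an integer $N\geq1$ with $\pi:=f^{\compose N}$ a projector. Since multiplicity is multiplicative under composition of correspondences on $X$ and $\mult f=1$ by assumption, $\mult\pi=(\mult f)^N=1$; hence $(X,\pi)$ is an \emph{upper} summand of $M(X)$ by Lemma~\ref{lemma left outer}.

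Next I would place the indecomposable upper summand of $M(X)$ --- unique up to isomorphism by Remark~\ref{outer unique}, call it $U$ --- inside $(X,\pi)$. Decomposing the upper summand $(X,\pi)$ into indecomposables, at least one of the resulting pieces is again upper, because the nonzero group $\Ch^0\big((X,\pi)\big)=\Ch^0(X)$ decomposes accordingly; and since this piece occurs in a complete decomposition of $M(X)$ obtained by refining $M(X)=(X,\pi)\oplus(X,\id-\pi)$, it is an indecomposable upper summand of $M(X)$, hence isomorphic to $U$. So I may assume $U=(X,e)$ for a projector $e\in\End(M(X))$ with $e\pi=\pi e=e$.

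Finally comes the transport step. Writing $\pi=f^{\compose N}=\beta\compose g\compose\alpha$ with $g:=(\alpha\compose\beta)^{\compose(N-1)}\colon M\to M$, I would put
$$\phi:=\alpha\compose e\colon M(X)\to M,\qquad \psi:=e\compose\beta\compose g\colon M\to M(X).$$
Using $\beta\compose g\compose\alpha=\pi$ and $e\pi e=e$ one checks that $\psi\compose\phi=e\pi e=e$ is the identity of $U$, and that $\epsilon:=\phi\compose\psi\colon M\to M$ is a projector for which $\phi$ and $\psi$ define mutually inverse isomorphisms between $U$ and $(M,\epsilon)$. As $\CM(F,\Lambda)$ is pseudo-abelian, $(M,\epsilon)$ is a direct summand of $M$, and $U\simeq(M,\epsilon)$ is the assertion.

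The one step that is not pure bookkeeping is the middle paragraph: one must know that \emph{the} indecomposable upper summand of $M(X)$ genuinely sits inside $(X,\pi)$, and this rests on the Krull--Schmidt principle (Corollary~\ref{krull-schmidt}) together with the uniqueness of the upper summand (Remark~\ref{outer unique}). It is worth emphasizing that no hypothesis on $M$ is needed: all the passages ``to an appropriate power to obtain a projector'' take place inside $\End(M(X))$, never inside $\End(M)$, which we do not control.
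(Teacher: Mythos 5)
Your proposal is correct and follows essentially the same route as the paper: take a power of $\beta\compose\alpha$ to obtain a projector $\pi$ on $M(X)$ via Corollary~\ref{cor-finite}, note $(X,\pi)$ is upper since $\mult\pi=1$, and transport it to $M$ by explicit correspondences built from $\alpha$, $\beta$. The paper is marginally slicker in that it directly exhibits $(X,\pi)\simeq(M,\tau)$ with $\tau:=(\alpha\compose\beta)^{\compose 2n}$ and then observes that \emph{every} summand of $(X,\pi)$ (so in particular the indecomposable upper one) is thereby a summand of $M$, whereas you first isolate the indecomposable upper summand $(X,e)$ inside $(X,\pi)$ and transport only it; both versions are valid and rest on the same idempotent bookkeeping.
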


\begin{proof}
By Corollary \ref{cor-finite}, the composition
$\pi:=(\beta\compose\alpha)^{\compose n}\in\End M(X)$
is a projector for some integer $n\geq1$.
Therefore $\tau:=(\alpha\compose\beta)^{\compose 2n}\in\End M$ is also a projector and
the (upper) summand $(X,\pi)$ of $M(X)$
is isomorphic to the summand $(M,\tau)$ of $M$ given by the image of $\tau$:
mutually inverse isomorphisms are, say,
$$
\alpha\compose(\beta\compose\alpha)^{\compose(2n)}:(X,\pi)\to(M,\tau)\;\text{ and }\;
\beta\compose(\alpha\compose\beta)^{\compose(4n-1)}:(M,\tau)\to(X,\pi)\;.
$$
Consequently, any (in particular, the indecomposable upper) summand
of $(X,\pi)$ is isomorphic to a summand of $M$.
\end{proof}

\begin{cor}
\label{criter out iso}
Let $X$ and $Y$ be irreducible geometrically split varieties satisfying the nilpotence principle.
The indecomposable upper summands of $M(X)$ and $M(Y)$  are isomorphic if and only if
there exist multiplicity $1$ correspondences $\alpha:M(X)\to M(Y)$ and $\beta:M(Y)\to M(X)$.
\end{cor}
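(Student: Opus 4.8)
The plan is to derive both implications from Lemma~\ref{variacija} together with the multiplicativity of multiplicities, keeping the coefficient ring finite as throughout this subsection. First I would fix notation: write $U_X=(X,\pi_X)$ and $U_Y=(Y,\pi_Y)$ for the indecomposable upper summands of $M(X)$ and $M(Y)$, which are well-defined up to isomorphism by Remark~\ref{outer unique}, with $\mult\pi_X=\mult\pi_Y=1$ by Lemma~\ref{lemma left outer}.

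For the ``if'' implication, given multiplicity $1$ correspondences $\alpha\colon M(X)\to M(Y)$ and $\beta\colon M(Y)\to M(X)$, I would compute $\mult(\beta\compose\alpha)=\mult\alpha\cdot\mult\beta=1$ and feed this into Lemma~\ref{variacija} with $M:=M(Y)$, obtaining that $U_X$ is isomorphic to a summand $N=(Y,\rho)$ of $M(Y)$. Then I would check that $N$ is indecomposable (immediate, being isomorphic to $U_X$) and \emph{upper}: choosing an isomorphism $g\colon U_X\to N$ with inverse $h$ and regarding $g,h$ as correspondences between the full motives $M(X)$ and $M(Y)$ (compose with the structural inclusions and projections of the summands), multiplicativity gives $\mult\rho=\mult(g\compose h)=\mult h\cdot\mult g=\mult(h\compose g)=\mult\pi_X=1$, so $N$ is upper by Lemma~\ref{lemma left outer}. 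Since the indecomposable upper summand of $M(Y)$ is unique up to isomorphism (Remark~\ref{outer unique}), this forces $N\simeq U_Y$, hence $U_X\simeq U_Y$.

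For the ``only if'' implication, I would start from an isomorphism $f\colon U_X\to U_Y$ with inverse $g$, and extend them to morphisms $\tilde f\colon M(X)\to M(Y)$ and $\tilde g\colon M(Y)\to M(X)$ by pre- and post-composing with the inclusions and projections of the summands $U_X,U_Y$. Then $\tilde g\compose\tilde f=\pi_X$ and $\tilde f\compose\tilde g=\pi_Y$, so $\mult\tilde f\cdot\mult\tilde g=\mult\pi_X=1$; since $\Lambda$ is commutative, $\mult\tilde f$ and $\mult\tilde g$ are units of $\Lambda$. Setting $\alpha:=(\mult\tilde f)^{-1}\tilde f$ and $\beta:=(\mult\tilde g)^{-1}\tilde g$ then yields correspondences $M(X)\to M(Y)$ and $M(Y)\to M(X)$ of multiplicity $1$, as required.

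The only genuinely delicate point is the claim in the ``if'' direction that the summand $N$ produced by Lemma~\ref{variacija} is honestly upper and not merely abstractly isomorphic to $U_X$; this rests on the observation that multiplicity of the cutting projector is an isomorphism invariant of a summand, obtained from the multiplicativity formula applied to an isomorphism and its inverse together with $\mult\pi_X=1$. A secondary wrinkle is that $\Lambda$ is only a finite connected ring, not necessarily a field, so in the ``only if'' direction $\mult\tilde f$ is a priori just a unit and must be normalized by rescaling. With these two observations in hand, everything else is bookkeeping with multiplicities and the Krull--Schmidt uniqueness of Corollary~\ref{krull-schmidt}.
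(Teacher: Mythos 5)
Your proof is correct and follows essentially the same route as the paper: the ``if'' direction via Lemma~\ref{variacija} and the ``only if'' direction by composing the isomorphism of upper summands with the structural inclusion and projection. The paper's own proof is two lines, and you have simply supplied the details it leaves to the reader — namely, that the summand of $M(Y)$ produced by Lemma~\ref{variacija} is again upper (so Remark~\ref{outer unique} applies), and that an invertible multiplicity can be normalized to $1$ by rescaling by a unit of $\Lambda$. One small simplification worth noting for the first of these: being upper is already an isomorphism-invariant of an abstract summand, since by Lemma~\ref{lemma left outer} it is equivalent to $\Ch^0(M)=\Hom(M,\Lambda)\ne 0$; so the detour through the multiplicity of the cutting projector, while correct, is not strictly needed.
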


\begin{proof}
The ``if''-part follows by Lemma \ref{variacija}.
For the ``only if''-part simply note that if $M_X\to M_Y$ is an isomorphism of upper summands of
$M(X)$ and $M(Y)$, then the multiplicity of the composition $M(X)\to M_X\to M_Y\to M(Y)$
is an invertible element of $\Lambda$.
\end{proof}

\subsection
{Rank of a motive}

We are still assuming that the coefficient ring $\Lambda$ is connected.

\begin{dfn}
Let $M$ be a geometrically split motive.
Over an extension of the base field the motive
$M$ becomes isomorphic to a finite sum of Tate motives.
The {\em rank} $\rk M$ of $M$ is defined as the number of
the summands in this decomposition.
\end{dfn}

\begin{rem}
The number of the summands in the above definition does not depend on the choice of
the extension or of the decomposition.
This is simply the rank of the free $\Lambda$-module
$\colim_{L/F}\Ch_*(M_L)$, where the colimit is taken over all field
extensions $L/F$.
\end{rem}

\begin{example}
\label{rank grassmannian}
Let $n$ be a positive integer and let $i$ be an integer in the interval $[0,\; n]$.
Let $A$ be a central simple $F$-algebra of degree $n$.
Since the variety $X=X(i,A)$ (see \S\ref{Varieties (of flags) of ideals}) is a twisted form of the grassmannian of
$i$-planes in an $n$-dimensional vector space, the rank of the motive of $X$ coincides with the rank of the motive of the grassmannian
and is equal to the binomial coefficient $\binom{n}{i}$.
\end{example}

\begin{rem}
Let $M$ be a direct summand of a geometrically split variety $X$ satisfying the
nilpotence principle.
Then we have: $\rk M=0$ if and only if $M=0$;
$\rk M=\rk M(X)$ if and only if $M=M(X)$.
\end{rem}

We are going to assume that the connected coefficient ring $\Lambda$ is finite.
Any non-invertible element of $\Lambda$ is then nilpotent (e.g., by \ref{finite}) and therefore
$\Lambda$ is local (e.g. by Lemma \ref{asm}).
Its residue field is also finite, and we write $p$ for the prime integer which is the characteristic of this field.

\begin{rem}
Let $k$ be the residue field of $\Lambda$.
The change of coefficients functor $\CM(F,\Lambda)\to\CM(F,k)$ induces bijection on the isomorphism classes of the objects,
\cite[Corollary 2.6]{MR2377113}.
Therefore all results below concerning the motives with coefficients in a finite connected ring,
are essentially about the motives with coefficients in a finite field.
Although $\F_p$ as the coefficient ring is sufficient for the current applications, a finite extension of $\F_p$ might be
also of interest.
Since the proofs for any $\Lambda$ do not differ much from the proofs for $\F_p$, we stay with the arbitrary finite connected coefficient ring.
\end{rem}

For any integer $l\ne0$,
we write $v_p(l)$ for the exponent of the highest power of $p$ dividing $l$ (and $v_p(0):=+\infty$).

\begin{lemma}
\label{rank-degree}
Assume the connected coefficient ring $\Lambda$ is finite.
Let $M$ be a direct summand of the motive (with coefficients in $\Lambda$)
of a geometrically split variety $X$.
Let $d$ be the greatest common divisor
of the degrees of the closed points on $X$.
Then $v_p(d)\leq v_p(\rk M)$.
\end{lemma}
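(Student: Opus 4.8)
The plan is to identify $\rk M$ with the categorical trace of the projector cutting out $M$, and to observe that this trace is forced to lie in the image of the degree homomorphism $\Ch_0(X)\to\Lambda$, which is governed by $d$.

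First I would arrange the coefficients so that this trace detects the full $p$-adic valuation of $\rk M$ and not merely $\rk M\bmod p$. When $\Lambda=\F_{p}$ one uses that the idempotent $\pi\in\Ch_{\dim X}(X\times X)$ defining $M$ lifts, for every $c$, to an idempotent with coefficients in $\Z/p^{c}\Z$ defining a motive of the same rank (\cite[Corollary 2.7]{J-invariant}); so one may compute with $\Lambda=\Z/p^{c}\Z$ for $c$ as large as desired, say $c=v_p(d)+1$. (A general finite connected $\Lambda$ reduces first to its residue field by \cite[Corollary 2.6]{MR2377113}, and then one lifts in the same way to a truncated Witt ring, so that $\Z/p^{c}\Z\hookrightarrow\Lambda$ with $c>v_p(d)$; only this inclusion is used below.)

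Next, since the categorical trace is additive over direct sums, invariant under extension of the base field, and satisfies $\operatorname{tr}(\id_{\Lambda(i)})=1_{\Lambda}$, passing to a splitting field gives $\rk M\cdot 1_{\Lambda}=\operatorname{tr}(\pi)\in\End_{\CM(F,\Lambda)}(\Lambda)=\Lambda$. Because $M(X)$ is dualizable with $M(X)^{\vee}\simeq M(X)(-\dim X)$ (Poincar\'e duality), with unit and counit of this duality both given by the class of the diagonal, the categorical trace of the correspondence $\pi$ coincides with the classical one: $\operatorname{tr}(\pi)=\deg(\delta^{*}\pi)$, where $\delta\colon X\to X\times X$ is the diagonal, so that $\delta^{*}\pi\in\Ch^{\dim X}(X)=\Ch_0(X)$. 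Now $\Ch_0(X)=\CH_0(X)\otimes_{\Z}\Lambda$ is generated by classes of closed points, and $\deg$ carries a closed point of degree $e$ to $e\cdot 1_{\Lambda}$; hence the image of $\deg\colon\Ch_0(X)\to\Lambda$ is the ideal generated by $d\cdot 1_{\Lambda}$. Writing $d=p^{v_p(d)}m$ with $p\nmid m$, the element $m\cdot 1_{\Lambda}$ is invertible in the local ring $\Lambda$, so this ideal equals $(p^{v_p(d)}\cdot 1_{\Lambda})$. Therefore $\rk M\cdot 1_{\Lambda}=\operatorname{tr}(\pi)\in(p^{v_p(d)}\cdot 1_{\Lambda})$, and since $\Z/p^{c}\Z\hookrightarrow\Lambda$ with $c>v_p(d)$, this forces $p^{v_p(d)}\mid\rk M$, that is, $v_p(d)\le v_p(\rk M)$.

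I expect the only genuinely delicate point to be the coefficient bookkeeping of the second paragraph: over $\F_{p}$ the trace records only $\rk M$ modulo $p$, so the assertion cannot be proved purely inside $\CM(F,\F_{p})$ and one really must pass to $p$-adically richer coefficients through the lifting of the projector. Granting that, and granting the classical identity $\operatorname{tr}(\pi)=\deg(\delta^{*}\pi)$ for the trace of a correspondence (which rests only on Poincar\'e duality and the projection formula), the remainder of the argument is immediate.
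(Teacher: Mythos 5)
Your proof is correct and follows essentially the same strategy as the paper's: lift the projector to $p$-adically richer coefficients (the paper uses $(\Z/p^{n})[t]/(f')$ with $n=v_p(d)$, you use a truncated Witt ring via \cite[Corollary 2.7]{J-invariant}), then identify $\rk M\cdot 1_\Lambda$ with $\deg(\delta^{*}\pi')$ over a splitting field and bound it using the image of the degree map on $\Ch_0(X)$. The paper writes out the splitting $\pi'_L=\sum a_i\times b_i$ explicitly instead of invoking the categorical trace, but that is only a cosmetic difference.
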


\begin{proof}
We may assume that $X$ is irreducible.

The residue field of the local ring $\Lambda$ is the finite field $\F_q$, where $q$ is a power of $p$.
Since changing coefficients does not change the rank of a motive,
we may assume (changing the coefficients via the homomorphism $\Lambda\to\F_q$) that $\Lambda=\F_q$.
Let $f\in\Z[t]$ be a monic integral  polynomial in a variable $t$ such that
$\F_q\simeq(\Z/p)[t]/(f)$.
Let $n=v_p(d)$.
Let $f'\in(\Z/p^n)[t]$ be a monic lifting of $f$.
The ring $\Lambda'=(\Z/p^n)[t]/(f')$ is also connected and finite, and there is an epimorphism
$\Lambda'\to\F_q$.
The image of an integer $m$ in $\Lambda'$ is the degree of a $0$-cycle class in
$\CH_0(X)\otimes\Lambda'$ only if $v_p(m)\geq n$ (that is, $m=0\in\Lambda'$).

Let $\pi\in\End M(X)=\Ch_{\dim X}(X\times X)=\CH_{\dim X}(X\times X)\otimes\F_q$
be the projector defining the summand $M$.
Let $\pi'\in\CH_{\dim X}(X\times X)\otimes\Lambda'$ be a lifting of $\pi$.
By Lemma \ref{finite},
replacing $\pi'$ by its appropriate power, we may assume that
$\pi'$ is a projector.

The rank of the motive $(X,\pi')\in\CM(F,\Lambda')$ coincides with $\rk M$.
Let $L/F$ be a splitting field of the motive $(X,\pi')$.
Mutually inverse isomorphisms between $(X,\pi')_L$ and a sum of $m=\rk M$
Tate motives are given by two sequences of homogeneous elements
$a_1,\dots,a_m$ and $b_1,\dots,b_m$ in $\CH(X_L)\otimes\Lambda'$ satisfying
$\pi'_L=a_1\times b_1+\dots+a_m\times b_m$ and such that for any $i,j=1,\dots,m$
the degree $\deg(a_ib_j)\in\Lambda'$ is $0$ for $i\ne j$ and $1$ for $i=j$.
The pull-back of $\pi'$ via the diagonal morphism of $X$ is therefore a $0$-cycle class
on $X$ of degree $m$.
It follows that $v_p(m)\geq n$, that is, $v_p(d)\leq v_p(\rk M)$.
\end{proof}

Lemma \ref{rank-degree} gives a new, particularly simple proof of the following fact
(the original proof, given in \cite{MR1356536},
makes use of Quillen's computation of $K$-theory of
$X$):

\begin{cor}
\label{SB-cor}
The motive with coefficients in a finite connected ring $\Lambda$ of the Severi-Brauer variety $X$ of a
central division algebra of degree $p^n$ ($n\geq0$ an integer, $p$ the characteristic of the residue field of $\Lambda$)
is indecomposable.
\end{cor}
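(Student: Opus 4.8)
The plan is to apply Lemma \ref{rank-degree} to the motive $M(X)$ itself and to every indecomposable summand in a complete decomposition, and then to use a rank count to rule out any nontrivial decomposition. Let $D$ be a central division algebra of degree $p^n$ over $F$ and let $X$ be its Severi-Brauer variety; then $\dim X = p^n-1$ and, by Example \ref{rank grassmannian} (with $i=1$), $\rk M(X) = \binom{p^n}{1} = p^n$. The variety $X$ is projective homogeneous, hence geometrically split and satisfies the nilpotence principle, so by Corollary \ref{krull-schmidt} the Krull-Schmidt principle holds for $M(X)$: write $M(X) = M_1 \oplus \dots \oplus M_r$ with each $M_j$ indecomposable. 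Since rank is additive over direct sums, $\sum_{j=1}^r \rk M_j = p^n$.

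The key input is the index computation for Severi-Brauer varieties: the greatest common divisor $d$ of the degrees of closed points on $X$ equals the index of $D$, which is $p^n$ since $D$ is division. (This is the classical fact that $\ind D$ is the gcd of degrees of finite splitting fields of $D$, realized as residue fields of closed points of $X$.) Hence $v_p(d) = n$. Now Lemma \ref{rank-degree}, applied to each summand $M_j$ (a direct summand of the motive of the geometrically split variety $X$), gives $v_p(\rk M_j) \geq v_p(d) = n$ for every $j$, so $p^n \mid \rk M_j$ for each $j$. Since each $\rk M_j \geq 1$ and they sum to $p^n$, there can be only one summand, and $\rk M_1 = p^n = \rk M(X)$; that is, $M(X)$ is indecomposable.

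The main obstacle — really the only nontrivial point, since the motivic machinery is all in place — is identifying the integer $d$ of Lemma \ref{rank-degree} with $p^n$. This is where one must invoke that $X$ has a closed point of degree exactly $p^n$ (giving $d \mid p^n$) and that every closed point has degree divisible by $\ind D = p^n$ (its residue field splits $D$), so $d = p^n$; equivalently $X$ has no closed point of degree prime to $p$ because $D$ does not split over any such extension. Once $v_p(d) = n$ is granted, the argument is a one-line rank count as above. (Note that, as the excerpt points out, this recovers the result of \cite{MR1356536} without Quillen's $K$-theory computation.)
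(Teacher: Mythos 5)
Your proof is correct and follows essentially the same route as the paper: both rely on Lemma \ref{rank-degree} together with the facts that $\rk M(X) = p^n$ and the gcd of the degrees of closed points on $X$ equals $\ind D = p^n$, forcing any nonzero summand to have rank divisible by $p^n$. (The appeal to Krull-Schmidt is harmless but unnecessary; a two-summand decomposition $M(X)=M\oplus N$ already suffices, as in the paper's one-line version.)
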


\begin{proof}
Since $\rk M(X)=p^n$ and the $\gcd$ of the degrees of the
closed points on $X$ is also $p^n$, the rank of any summand of $M(X)$
is $0$ or $p^n$ by Lemma \ref{rank-degree}.
\end{proof}

\subsection
{Varieties (of flags) of ideals}
\label{Varieties (of flags) of ideals}

Let $A$ be a central simple $F$-algebra.
The $F$-dimension of any right ideal in $A$ is divisible by the degree
$\deg A$ of $A$;
the quotient is the {\em reduced dimension} of the ideal.
For any integer $i$, we write $\SB(i;A)$
for the generalized Severi-Brauer variety of the right ideals in $A$ of reduced
dimension $i$.
In particular, $\SB(0;A)=\Spec F=\SB(\deg A;A)$ and
$\SB(i;A)=\emptyset$ for $i$ outside of the interval $[0,\;\deg A]$.
The variety $\SB(1;A)$ is the usual Severi-Brauer variety of $A$ studied in \cite{MR657430}.

For a finite sequence of integers $i_1,\dots,i_r$, we write
$\SB(i_1,\dots, i_r;A)$
for the variety of flags of right ideals in $A$ of reduced
dimensions $i_1,\dots,i_r$ (non-empty if and only if $0\leq i_1\leq\dots\leq i_r\leq\deg A$).

We have
$
\ind A_{F\left(X(i_1,\dots,i_r;A)\right)}=
\gcd(i_1,\dots,i_r,\ind A).
$
Another classical property of the variety $X(i_1,\dots,i_r;A)$
which we are using frequently is that the greatest common divisor of the degrees of its closed points is equal to
$
\ind A/\gcd(i_1,\dots,i_r,\ind A).
$

The varieties introduced above are projective homogeneous under the
natural action of the algebraic group $\Aut A$.
As in \S\ref{Introduction},
we write $\cX_A$ for the class $\cX_{\Aut A}$
of all finite direct products of such varieties.

\subsection
{Canonical dimension}
\label{Canonical dimension}

The notion of canonical dimension was introduced in \cite{MR2183253},
of canonical dimension at $p$ in \cite{MR2258262}.
We refer to \cite{MR2258262} and \cite{ASM-ed} for proofs of the statements cited below.

Let $X$ be a smooth complete irreducible variety over $F$.
{\em Canonical dimension} $\cd X$ of $X$ is defined  as the
least dimension of the image of a rational map $X\RatM X$.
For a positive prime integer $p$, {\em canonical dimension at $p$}, or {\em canonical $p$-dimension}
$\cd_p X$ of $X$ is defined as the
least dimension of the image of a morphism $X'\to X$,
where $X'$ is an irreducible variety with $\dim X'=\dim X$ admitting a dominant morphism $X'\to X$ of
a $p$-coprime degree.

If $L/F$ is a finite field extension of degree prime to $p$, then $\cd_p X_L=\cd_p
X$.
If two smooth complete irreducible $F$-varieties $X_1$ and $X_2$ are such that there exist
rational maps $X_1\RatM X_2$ and $X_2\RatM X_1$, then $\cd X_1=\cd X_2$ and $\cd_p X_1=\cd_p X_2$
for any $p$.

One has $\cd_p X\leq\cd X\leq \dim X$.
The variety $X$ is called {\em incompressible} ({\em minimal} in \cite{ASM-ed})
if $\cd X=\dim X$
and {\em $p$-incompressible} ({\em $p$-minimal} in \cite{ASM-ed}) if $\cd_p X=\dim X$.
A $p$-incompressible (for some prime $p$) variety is incompressible.

The variety  $X$ is $p$-incompressible if for any element
$\alpha\in\CH_{\dim X}(X\times X)$ the multiplicity  $\mult(\alpha)$ coincides modulo $p$ with
the multiplicity $\mult(\alpha^t)$ of the transpose $\alpha^t$ of $\alpha$.
One can show that the converse statement holds for homogeneous $X$ (but we will not use this in the paper).

\begin{lemma}
\label{outer and incompressible}
A smooth complete irreducible
variety $X$ is $p$-incompressible if any upper summand of the motive
of $X$ with coefficients in $\F_p$ is outer.
\end{lemma}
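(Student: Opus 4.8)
The plan is to reduce $p$-incompressibility to a statement about multiplicities, using the criterion recalled just before the lemma: $X$ is $p$-incompressible provided that for every $\alpha\in\CH_{\dim X}(X\times X)$ one has $\mult(\alpha)\equiv\mult(\alpha^t)\pmod p$. Working modulo $p$, it suffices to show that every $\alpha\in\Ch_{\dim X}(X\times X)=\End M(X)$ (coefficients in $\F_p$) satisfies $\mult(\alpha)=\mult(\alpha^t)$ in $\F_p$. So the first step is to translate the hypothesis "every upper summand of $M(X)$ is outer" into a usable form via the Krull--Schmidt principle (Corollary~\ref{krull-schmidt}), which applies since a projective homogeneous variety, or more generally any geometrically split variety satisfying the nilpotence principle, has this property; note $X$ here is assumed smooth complete irreducible, and to invoke Krull--Schmidt I would additionally use that $X$ is geometrically split and satisfies the nilpotence principle — if the lemma is meant to be applied only in that setting (as the surrounding text suggests), this is automatic, and otherwise the multiplicity criterion can be used directly without motivic decomposition.

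Next I would argue as follows. Fix $\alpha\in\End M(X)$; I want $\mult(\alpha)=\mult(\alpha^t)$. By Corollary~\ref{cor-finite}, a suitable power of $\alpha$ is a projector, but more simply I can decompose: write $M(X)=\bigoplus_i M_i$ into indecomposables (Corollary~\ref{krull-schmidt}), and let $\pi_i$ be the corresponding projectors, so $\mathrm{id}_{M(X)}=\sum_i\pi_i$. Since multiplicity is additive and $\mult(\alpha)$ for $\alpha$ supported on $M_i\to M_j$ ($i\ne j$) contributes nothing unless both $M_i$ and $M_j$ are upper (a correspondence of nonzero multiplicity between summands forces the target to be upper and, reading multiplicity of $\pi_j\alpha\pi_i$, forces $\mult\pi_i=1$ i.e. $M_i$ upper as well), the only possibly-nonzero contributions to $\mult(\alpha)$ come from the pieces $\pi_j\alpha\pi_i$ with $M_i,M_j$ both upper. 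By Remark~\ref{outer unique} there is exactly one upper indecomposable summand up to isomorphism; call it $M$, appearing with some multiplicity in the decomposition. Now the hypothesis says $M$ is outer, hence also lower. Lower means $\mult(\pi^t)=1$ for the projector $\pi$ cutting out $M$ (Lemma~\ref{lemma right outer}), and combined with $\mult(\pi)=1$ this gives symmetry: for any $\beta$ factoring through copies of $M$, $\mult(\beta)$ and $\mult(\beta^t)$ are both computed by the same "corner" behaviour, so they agree. Transposing $\alpha$ interchanges upper and lower roles, and because every upper summand is already lower, the bookkeeping is symmetric and $\mult(\alpha)=\mult(\alpha^t)$.

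The cleanest route for this last point is: $\mult(\alpha)$ equals $\mult$ of the component of $\alpha$ lying in $\Hom(M',M'')$ where $M',M''$ are the "topmost" Tate-pieces, i.e. $\mult(\alpha)=\mult(\pi_{\mathrm{up}}\circ\alpha\circ\pi_{\mathrm{up}})$ where $\pi_{\mathrm{up}}$ is the sum of the upper projectors; and $\mult(\alpha^t)=\mult(\pi_{\mathrm{up}}^t\circ\alpha^t\circ\pi_{\mathrm{up}}^t)=\mult\big((\pi_{\mathrm{up}}\circ\alpha\circ\pi_{\mathrm{up}})^t\big)$. Since each upper summand is outer, $\pi_{\mathrm{up}}$ and $\pi_{\mathrm{up}}^t$ cut out isomorphic (indeed, as abstract motives, equal-type) summands, and on a sum of copies of the outer motive $M$ the multiplicity of an endomorphism equals that of its transpose — this is where one uses that $M$ is simultaneously upper and lower, so that $\Ch^0(M)$ and $\Ch_{\dim X}(M)$ are both "full" and the pairing is symmetric. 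I expect the main obstacle to be precisely this bookkeeping step: making rigorous the claim that multiplicity "only sees" the outer part and that on the outer part it is transpose-invariant, without getting lost in indices. One must be careful that $\mult$ of a composition multiplies (cf. the cited $\mult(\beta\circ\alpha)=\mult(\beta)\mult(\alpha)$) and that $\mult(\pi^t)=1$ genuinely propagates through the isomorphism between the summand given by $\pi$ and its transpose-twin. Once that is pinned down, the conclusion $\mult(\alpha)\equiv\mult(\alpha^t)\pmod p$ for all $\alpha$ is immediate, and the multiplicity criterion for $p$-incompressibility finishes the proof.
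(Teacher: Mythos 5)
Your high-level plan (reduce to the multiplicity criterion, then exploit the hypothesis that upper implies lower) is the right one, and your observation that one needs $X$ geometrically split and satisfying the nilpotence principle is correct: the paper's own proof also relies on Corollary~\ref{cor-finite} for the ``some power of $\alpha$ is a projector'' step, even though the lemma's statement only says ``smooth complete irreducible''.

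However, there is a genuine gap exactly at the step you yourself flag as ``the main obstacle''. After reducing to showing that, for $\beta:=\pi_{\mathrm{up}}\circ\alpha\circ\pi_{\mathrm{up}}$ with $\pi_{\mathrm{up}}$ the (unique) outer indecomposable projector, one has $\mult(\beta)=\mult(\beta^t)$, you assert this follows because ``the pairing is symmetric''. It does not follow from outerness alone: from $\mult(\pi_{\mathrm{up}})=\mult(\pi_{\mathrm{up}}^t)=1$ and multiplicativity of $\mult$ under composition, all you can extract directly is that $\mult(\beta)$ is nonzero if and only if $\mult(\beta^t)$ is nonzero (both are nonzero exactly when $\beta$ is invertible in the local ring $\End(X,\pi_{\mathrm{up}})$, since transposition preserves nilpotence). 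For $p=2$ ``both zero or both nonzero'' is the same as ``equal'', but for $p>2$ it is strictly weaker, and your argument stops short of the conclusion. The paper closes this gap with a normalization you do not use: assuming $\mult(\alpha)\ne\mult(\alpha^t)$, replace $\alpha$ by $\alpha-\mult(\alpha^t)\cdot\Delta_X$; this forces $\mult(\alpha^t)=0$ while keeping $\mult(\alpha)\ne0$, so a suitable power of $\alpha$ is a projector with $\mult(\pi)=1$ and $\mult(\pi^t)=0$, i.e.\ an upper but not lower summand, contradicting the hypothesis. This contrapositive route also sidesteps the entire Krull--Schmidt decomposition: no decomposition of $M(X)$ into indecomposables is needed, only the single projector produced from the normalized $\alpha$. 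So while your framework could be completed (the missing piece is exactly the subtraction of $\mult(\alpha^t)\cdot\Delta_X$), as written the argument does not establish $\mult(\alpha)=\mult(\alpha^t)$, and the decomposition machinery you invoke is unnecessary once one sees the trick.
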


\begin{proof}
We assume that there is an element $\alpha\in\CH_{\dim X}(X\times X)/p$ with
$\mult(\alpha)\ne\mult(\alpha^t)$.
Replacing $\alpha$ by $\alpha-\mult(\alpha^t)\cdot\Delta_X$, where $\Delta_X$ is the diagonal class,
we get $\mult(\alpha)\ne0$ and $\mult(\alpha^t)=0$.
A power
of the correspondence $\alpha$ is a projector which determines a summand of $M(X)$.
This summand is upper but not lower.
\end{proof}

\begin{cor}
\label{indecomposable-incompressible}
If the motive with coefficients in $\F_p$ of a smooth complete variety $X$ is
indecomposable, then the variety $X$ is $p$-incompressible.
\qed
\end{cor}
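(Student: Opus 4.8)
The statement to prove is: if the $\F_p$-motive $M(X)$ of a smooth complete variety $X$ is indecomposable, then $X$ is $p$-incompressible. The plan is to reduce this immediately to Lemma~\ref{outer and incompressible}, which says that $X$ is $p$-incompressible as soon as every upper summand of $M(X)$ (with $\F_p$-coefficients) is outer. So it suffices to check the hypothesis of that lemma under the assumption that $M(X)$ is indecomposable.

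First I would reduce to the irreducible case: since $p$-incompressibility and the notion of motivic summand are defined for irreducible (indeed, the running hypotheses in Lemma~\ref{outer and incompressible} already presuppose irreducibility), and an indecomposable motive of a smooth complete variety forces the variety to be connected, one may assume $X$ is smooth complete irreducible. Next, observe that the only summand of $M(X)$, up to isomorphism, is $M(X)$ itself, because $M(X)$ is indecomposable. The whole motive $M(X)$ of a smooth complete irreducible variety is an outer summand of itself: this is recorded right after Definition~\ref{def-outer}, and it is immediate since $\Ch^0(M(X))=\Ch^0(X)\ne 0$ and $\Ch_d(M(X))=\Ch_d(X)\ne 0$ where $d=\dim X$, so $M(X)$ satisfies the four equivalent conditions of both Lemma~\ref{lemma left outer} and Lemma~\ref{lemma right outer}. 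Hence the unique upper summand of $M(X)$ is $M(X)$ itself, and it is outer.

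Therefore the hypothesis of Lemma~\ref{outer and incompressible} is satisfied: every upper summand of the $\F_p$-motive of $X$ (there is only one, namely $M(X)$) is outer. Applying that lemma gives that $X$ is $p$-incompressible, which is what we wanted.

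There is essentially no obstacle here; the corollary is a formal consequence of Lemma~\ref{outer and incompressible} together with the elementary fact that the whole motive of a smooth complete irreducible variety is an outer summand of itself. The only point requiring a moment's care is making sure that ``indecomposable'' is used in the pseudo-abelian/Krull--Schmidt sense for which $M(X)$ admits no non-trivial projector, so that it genuinely has no proper upper summand; this is automatic for geometrically split varieties satisfying the nilpotence principle by Corollary~\ref{krull-schmidt}, but is in any case not needed for the direction we are proving, since we only need that $M(X)$ itself is an upper summand which is outer.
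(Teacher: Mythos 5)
Your proof is correct and follows the only natural route, the same one the paper has in mind when it writes \texttt{\textbackslash qed} immediately after the statement: since $M(X)$ is indecomposable, its only nonzero summand (up to isomorphism) is $M(X)$ itself, which is outer, so every upper summand is outer, and Lemma~\ref{outer and incompressible} applies. One small caveat about your closing remark: it is \emph{not} true that indecomposability ``is in any case not needed'' and that ``we only need that $M(X)$ itself is an upper summand which is outer.'' The whole motive is an outer summand of itself for \emph{every} smooth complete irreducible variety, so that fact alone cannot possibly yield $p$-incompressibility; Lemma~\ref{outer and incompressible} requires \emph{every} upper summand to be outer, and it is precisely indecomposability that rules out the existence of a proper upper-but-not-lower summand. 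Your main argument does use this correctly (``Hence the unique upper summand of $M(X)$ is $M(X)$ itself''), so the proof stands; only the final aside should be discarded.
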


In Lemma \ref{outer and incompressible} and in Corollary \ref{indecomposable-incompressible},
$\F_p$ can be replaced by any finite connected coefficient ring with residue field of characteristic $p$.

\section
{Motivic decomposition theorems}

The coefficient ring $\Lambda$ is supposed to be finite and connected in this section.
We write $p$ for the prime integer which is the
characteristic of the residue field of $\Lambda$.

Let $G$ be a semisimple affine algebraic group of inner type over a field $F$.
We write ${\cT}_G$ (or simply $T$)
 for the set of the conjugacy classes of the {\em maximal} parabolic subgroups in $\bar{G}$
 ($T_G$ can be identified with the set of vertices of the Dynkin diagram of $G$).
The subsets of $\cT$ are in natural one-to-one correspondence with the set of conjugacy classes of all parabolic
subgroups in $\bar{G}$.
This correspondence is defined as follows:
the conjugacy class corresponding $\tau\subset T$ is the image of the map of the set of Borel subgroups in $\bar{G}$
into the set of parabolic subgroups in $\bar{G}$, associating to a Borel subgroup $B$
the intersection of representatives containing $B$ of the classes in $\tau$.
For any subset $\tau\subset \cT$, we write $X_\tau$ or $X_{\tau,G}$
for the projective $G$-homogeneous $F$-variety of parabolic subgroups in $G$
of the type $\tau$.
For instance, $X_{\cT}$ is the variety of Borel subgroups.
Any projective $G$-homogeneous variety is isomorphic to $X_\tau$ for some $\tau$.

\begin{dfn}
Let $G$ and $\cT$ be as above.
For any $\tau\subset \cT$, we write $M_\tau$ or $M_{\tau,G}$ for
the indecomposable upper (see Definition \ref{def-outer} and Remark \ref{outer unique}) summand of
the Chow motive (with coefficients in $\Lambda$) of the variety $X_\tau$.
The set of the isomorphism classes $[M_\tau]$, where $\tau$ runs over the subsets in $\cT$, is called the
{\em set of upper motives}
of the algebraic group $G$.
\end{dfn}

\begin{rem}
For a field extension $E/F$, the motive $M_{\tau,G_E}$ is a summand of the motive
$(M_{\tau,G})_E$ and $M_{\tau,G_E}\ne(M_{\tau,G})_E$ in general.
\end{rem}

\begin{rem}
\label{rem-outiso}
The motives $M_{\tau,G}$ and $M_{\tau',G'}$ are isomorphic for certain  $G=G'$ and certain $\tau\ne\tau'$.
They are also isomorphic for certain $G\not\simeq G'$ and certain $\tau,\tau'$ (see Example \ref{G and G'}).
A simple criterion of isomorphism is given in  Corollary \ref{criter out iso}.
\end{rem}

\begin{example}
\label{G and G'}
Let $G'$ be the semisimple anisotropic kernel of the group $G$.
The set $\cT_{G'}$ is canonically identified with the subset of $\cT_G$
consisting of the classes of those maximal parabolic subgroups in $\bar{G}$ which are not defined over $F$
(see \cite{MR0224710}).
For any $\tau\subset\cT_G$, the motive $M_{\tau,G}$ is isomorphic to
the motive $M_{\tau',G'}$ where $\tau'=\tau\cap\cT_{G'}$.
In particular, the set of upper motives of $G$ is determined by $G'$.
\end{example}

For any field extension $E/F$ let $\tau_E$ and also $\tau_{E,G}$ stand for the subset of $\cT_G$
consisting of the classes of those maximal parabolic
subgroups in $\bar{G}$ which are defined over $E$.
For instance, for any $\tau\subset\cT_G$ and  $E=F(X_\tau)$ we have $\tau\subset\tau_E$.
Note that $\tau_F=\cT_G\setminus\cT_{G'}$.

\begin{thm}[General motivic decomposition theorem]
\label{Gbasic1}
Let $G$ be a semisimple affine algebraic group of inner type over $F$.
Any indecomposable summand of the Chow motive (with coefficients in a finite connected ring) of
any variety $X$ in the class $\cX_G$ (see \S\ref{Introduction})
is isomorphic to a shift of $M_\tau$ for some $\tau\subset T_G$ satisfying $\tau\supset\tau_{F(X)}$.
\end{thm}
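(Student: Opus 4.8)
The plan is to reduce the general case to the case of a single projective $G$-homogeneous variety, and then to prove that case by exploiting the Krull--Schmidt principle (Corollary \ref{krull-schmidt}) together with the characterization of upper summands via multiplicity-$1$ correspondences (Corollary \ref{criter out iso} and Lemma \ref{variacija}). First I would invoke the motivic decomposition results of \cite{MR2264459} (and \cite{MR1758562}) which tell us that the motive of any variety $X$ in $\cX_G$ decomposes into a sum of shifts of motives of projective $G$-homogeneous $F$-varieties; since the indecomposable summands of $M(X)$ are, by the Krull--Schmidt property, exactly the indecomposable summands (with appropriate shifts) of these $M(X_\sigma)$, it suffices to treat $X = X_\sigma$ for a single subset $\sigma\subset T_G$. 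Moreover, for such an $X$ one has $\tau_{F(X)}\supset\sigma$ essentially by definition (the maximal parabolics in the classes of $\sigma$ acquire representatives over $F(X_\sigma)$), so the condition $\tau\supset\tau_{F(X)}$ is the natural thing to aim for.

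The heart of the argument is then an induction, say on $\dim X$ (or on some other measure of the size of $G$, e.g. passing to the semisimple anisotropic kernel as in Example \ref{G and G'}). Fix an indecomposable summand $N$ of $M(X_\sigma)$. After a shift we may assume $N$ occurs in a way that lets us compare it with an upper summand: more precisely, I would argue that $N$, up to shift, is the \emph{upper} summand of $M(X_\sigma)$ \emph{for a suitable subvariety/sub-homogeneous-space}. The key observation is that the set $\tau:=\tau_{F(X_\sigma)}$ (the classes of maximal parabolics defined over the generic point of $X_\sigma$) defines a projective $G$-homogeneous variety $X_\tau$ which has a rational point over $F(X_\sigma)$ and, conversely, $X_\sigma$ has a point over $F(X_\tau)$; hence there are mutually "inverse" multiplicity-$1$ correspondences between $X_\sigma$ and $X_\tau$ in the appropriate degrees. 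By Corollary \ref{criter out iso} the indecomposable upper summands of $M(X_\sigma)$ and $M(X_\tau)$ coincide, and this upper summand is $M_\tau$. For the \emph{non-upper} indecomposable summands $N$ of $M(X_\sigma)$ one argues that each such $N$ becomes split over $F(X_\sigma)$-extensions detecting strictly fewer parabolics, i.e. $N$ is, up to shift, a summand of a motive of a projective $G$-homogeneous variety $X_{\sigma'}$ with $\sigma'\supsetneq\sigma$ (equivalently of smaller dimension after restriction, or belonging to a proper anisotropic kernel), to which the induction hypothesis applies and yields $N\simeq M_{\tau'}(i)$ with $\tau'\supset\tau_{F(X_{\sigma'})}\supset\tau_{F(X_\sigma)}$. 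One then has to check the bookkeeping: that every indecomposable summand of $M(X_\sigma)$ other than $M_\tau$ really does arise this way, which I expect to follow by restricting $M(X_\sigma)$ to $F(X_\sigma)$, using that the upper summand $M_\tau$ "absorbs" the unique Tate summand $\Lambda$ (Lemma \ref{tate outer}, Remark \ref{outer unique}), and that the complement is a sum of motives supported on the boundary strata.

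The main obstacle, I expect, is precisely this last step: showing that \emph{every} indecomposable summand $N\not\simeq M_\tau(i)$ of $M(X_\sigma)$ is a shift of a summand of some $M(X_{\sigma'})$ with $\sigma'\supsetneq\sigma$, so that the induction can be fed. This requires a genuine input about the geometry of projective homogeneous varieties — namely that after passing to $F(X_\sigma)$ the variety $X_\sigma$ becomes isotropic and its motive decomposes into Tate twists of motives of \emph{smaller} projective homogeneous varieties (the Levi-type pieces), combined with the nilpotence principle to lift the splitting of idempotents back to $F$; this is exactly where \cite{MR2264459}, \cite{MR1758562}, and \cite{MR2110630} do the real work. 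Once that structural decomposition is in hand, the rest is a clean diagram chase with multiplicity-$1$ correspondences and Krull--Schmidt. I would also remark that the inclusion $\tau\supset\tau_{F(X)}$ (rather than equality) is unavoidable and already visible for $X$ a product: the generic point of a product sees more parabolics than any single factor, but the upper summand of the product is still controlled by $\tau_{F(X)}$.
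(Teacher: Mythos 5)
Your broad strategy is aimed in the right direction, and several ingredients you identify are indeed the ones the paper uses: the motivic decomposition of $X$ over $F(X)$ into Levi-type pieces (\cite{MR2178658}, \cite{MR1758562}), Krull--Schmidt via Corollary \ref{krull-schmidt}, and multiplicity-$1$ correspondences via Lemma \ref{variacija} and Corollary \ref{criter out iso}. Your identification of the upper summand of $M(X_\sigma)$ with $M_\tau$ for $\tau=\tau_{F(X_\sigma)}$ by a back-and-forth of multiplicity-$1$ correspondences is correct and is in fact close to how the paper proves its Theorem \ref{basic1}. However, there is a genuine gap in how you propose to handle the \emph{remaining} indecomposable summands, and the gap is precisely where the real work of the paper lives.

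You assert that every non-upper indecomposable summand $N$ of $M(X_\sigma)$ is, up to shift, a summand of $M(X_{\sigma'})$ for some $\sigma'\supsetneq\sigma$, and then feed this into an induction. This step is not justified and, as written, is circular: the theorem itself says $N\simeq M_\tau(i)$ where $M_\tau$ is a summand of $M(X_\tau)$ with $\tau\supset\tau_{F(X_\sigma)}\supsetneq\sigma$, so ``$N$ is a shift of a summand of some $M(X_{\sigma'})$'' is essentially the conclusion you want, not an available reduction. What you actually have after passing to $L=F(X_\sigma)$ and applying \cite{MR2178658} is a decomposition of $M(X_\sigma)_L$ into shifts of motives of products of $G'$-homogeneous $L$-varieties, where $G'$ is the anisotropic kernel of $G_L$; by induction (the paper inducts on $\rk G$, not on $\sigma$ or on $\dim X_\sigma$, which is what makes this well-founded and naturally closed under the products that appear), each such piece is a shift of some $M_{\tau',G'}$. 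But these are $L$-motives. Lifting the resulting identification of $N_L$ with a shift of $M_{\tau',G'}$ to an identification of $N$ with a shift of the $F$-motive $M_{\tau'\cup\tau_L,G}$ is not a ``splitting of idempotents via nilpotence'' as you suggest; nilpotence alone does not hand you a projector over $F$ whose image is $M_\tau$. The paper carries out this lift by an explicit construction: it builds two morphisms $\alpha\colon M(X_\tau)(i)\to M$ and $\beta\colon M\to M(X_\tau)(i)$ \emph{over $F$}, then invokes Lemma \ref{variacija}. The construction of $\alpha$ and $\beta$ happens by producing suitable cycles over $\bar L$, observing that they are already rational over $L(X_\tau)$ (respectively $L$), and then descending to $F$ because the extensions $L(X_\tau)/F(X_\tau)$ and $L(X_\sigma)/L$ are purely transcendental (thanks to $\tau_L\subset\tau$ and $\tau\subset\tau_L\cup\tau'$). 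The verification $\mult(\beta\compose\alpha)=1$ is then a bookkeeping argument with dual bases over $\bar L$. None of this is in your sketch; it is exactly the ``diagram chase'' you defer to, and it is where the content is.

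Two smaller points. First, reducing at the outset from a product to a single $X_\sigma$ by \cite{MR2264459} is legitimate but not obviously advantageous here, because the inductive step over $L$ reintroduces products of $G'$-homogeneous varieties anyway; the paper simply keeps an arbitrary $X\in\cX_G$ throughout. Second, your intended induction ``on $\sigma$ growing toward $\cT$'' would need a base case at $\sigma = \cT$ handled by \cite[Theorem 5.17]{J-invariant}, which is fine, but you should state the induction parameter precisely; the paper's choice $\rk G$ is cleaner because passing to the anisotropic kernel of $G_{F(X)}$ strictly drops the rank whenever $X\ne\Spec F$.
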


\begin{example}
Assume that  the variety $X$ in Theorem \ref{Gbasic1} is  {\em generically split}, that is, $\tau_{F(X)}=\cT$.
Then by Theorem \ref{Gbasic1} the complete motivic decomposition of $X$ consists of shifts of the motive
$M_\cT$.
This is the decomposition constructed in \cite[Theorem 5.17]{J-invariant}.
\end{example}

\begin{proof}[Proof of Theorem \ref{Gbasic1}]
We proof
Theorem \ref{Gbasic1}
simultaneously for all $G$ (over all fields) using an induction on $n=\rk G$
(the rank of $G$ which is the number of elements in $T_G$).
The base of the induction is $n=0$ where $\cX_G=\{\Spec F\}$ and the statement is trivial.

From now on we are assuming that $n\geq1$ and that Theorem
\ref{Gbasic1} is already proven for all groups of rank $<n$.

Let $X\in\cX_G$ and let $M$ be an indecomposable summand of $M(X)$.
We have to show that $M$ is isomorphic to a shift of $M_\tau$ for some $\tau\subset T_G$ containing
$\tau_{F(X)}$.

We may assume that $X\ne\Spec F$.
In this case the set $\tau_{L,G}$, where $L=F(X)$, is non-empty.
Let $G'/L$ be the semisimple anisotropic kernel of the group $G_L$.
We recall that the set $\cT_{G'}$ is identified with $\cT_G\setminus\tau_{L,G}$.
In particular, $\rk G'<n$.
According to \cite{MR2178658},
the motive of $X_L$ decomposes into a sum of shifts of motives
of products of projective $G'$-homogeneous $L$-varieties.
It follows by the induction hypothesis (applied to $G'$),
that each summand of the complete motivic decomposition of $X_L$ is a shift of
$M_{\tau',G'}$ for some $\tau'\subset\cT_{G'}$.
The complete decomposition of $M_L$ is a part of the above decomposition
(in the sense of the Krull-Schmidt principle, see \S\ref{Chow motives with finite coefficients}).

Each summand of the complete decomposition of $M_L$
decomposes over an algebraic closure $\bar{L}$ of $L$ into a sum of
Tate motives.
This gives a decomposition of $\bar{M}=M_{\bar{L}}$ into a sum of
Tate motives.
Let us choose a Tate summand $\Lambda(i)$ with the smallest $i$ in the decomposition of $\bar{M}$.
This summand comes from the decomposition of the $i$th shift of
some $\bar{M}_{\tau',G'}$ for some $\tau'\subset\cT_{G'}$.
We set $\tau=\tau'\cup\tau_L\subset\cT_G$.
We shall show that $M\simeq M_{\tau,G}(i)$
for these $\tau$ and $i$.

We write $Y$ for the $F$-variety $X_{\tau,G}$ and we write
$Y'$ for the $L$-variety $X_{\tau',G'}$.
We write $N$ for the $F$-motive $M_{\tau,G}$ and we write
$N'$ for the $L$-motive $M_{\tau',G'}$.

By Lemma \ref{variacija} and since $M$ is indecomposable,
it suffices to construct morphisms
$$
\alpha:M(Y)(i)\to M\;\;\text{and}\;\;
\beta:M\to M(Y)(i)
$$
satisfying $\mult(\beta\compose\alpha)=1$.

The fixed above summand $\Lambda(i)$ of $\bar{M}$ is produced
by two elements
$$
a\in\Ch_i(\bar{X})=\Hom\big(\Lambda(i),M(\bar{X})\big)
\text{ and }
b\in\Ch^i(\bar{X})=\Hom\big(M(\bar{X}),\Lambda(i)\big)
$$
such that the degree of the $0$-cycle class $(a\cdot b)\in\Ch_0(\bar{X})$ is $1\in\Lambda$
(the condition on the degree means that the composition
$$
\begin{CD}
\Lambda(i)@>a>> M(\bar{X})@>b>>\Lambda(i)
\end{CD}
$$
is the identity).
The element $a$ is the image under the embedding
$\Ch_0(\bar{N}')\hookrightarrow\Ch_i(\bar{X})$
of the class of a rational point in $\Ch_0(\bar{Y}')=\Ch_0(\bar{N}')$
(see Lemma \ref{upper and 0-cycles}).
Since $\tau'\subset\tau$, the $L$-variety $Y'$ has an $L(Y)$-point, and it follows that
the element $a_{\bar{L}(Y)}$ is $L(Y)$-rational
(that is, lies in the image of the change of field homomorphism of the Chow groups).
Since $\tau_L\subset\tau$, the product $X$ of projective homogeneous varieties has an $F(Y)$-point
and therefore the field extension $L(Y)/F(Y)$ is purely transcendental.
Consequently, the element $a_{\bar{L}(Y)}$ is $F(Y)$-rational and
lifts to an element $\alpha_1\in\Ch_{\dim Y+i}(Y\times X)$.
We mean here a lifting with respect to the composition
$$
\begin{CD}
\Ch_{\dim Y+i}(Y\times X) \onto \Ch_i(X_{F(Y)}) @>{\res_{\bar{L}(Y)/F(Y)}}>> \Ch_i(X_{\bar{L}(Y)})
\end{CD}
$$
where the first map is the epimorphism given by the pull-back with respect to the morphism
$X_{F(Y)}\to Y\times X$ induced by the generic point of the variety $Y$.

We define the morphism $\alpha$ as the composition
$$
\begin{CD}
M(Y)(i)@>{\alpha_1}>> M(X)@>>> M
\end{CD}
$$
where the second map is the projection of $M(X)$ onto its summand $M$.

Note that the element $b$ is the image of the class $[\bar{Y}']\in\Ch^0(\bar{Y}')=\Ch^0(\bar{N}')$
under the embedding $\Ch^0(\bar{N}')\hookrightarrow\Ch^i(\bar{X})$.
Let $\beta_1\in\Ch_{\dim Y'}(Y'\times Y_L)$ be the class of
the closure of the graph of a rational map (of $L$-varieties)
$Y'\RatM Y_L$ (which exists because $\tau\subset\tau_L\cup \tau'$).
Let $\beta_2$ be the image of $\beta_1$ under the composition
of the homomorphisms
\begin{multline*}
\Ch^{\dim Y}(Y'\times Y_L)=\Ch^{\dim Y}\big(M(Y')\otimes M(Y_L)\big)\to
\Ch^{\dim Y}\big(N'\otimes M(Y_L)\big)\to \\
\Ch^{\dim Y+i}\big(M(X_L)\otimes M(Y_L)\big)=\Ch^{\dim Y+i}(X_L\times Y_L)
\end{multline*}
where the first arrow is induced by the projection $M(Y')\to N'$ and the second arrow induced
by the imbedding $N'(i)\to M(X_L)$.
The element
$\beta_2$
lifts to an element
$$
\beta_3\in\Ch^{\dim Y+i}(X\times X\times Y).
$$
We mean here a lifting with respect to the
epimorphism
$$
\begin{CD}
\Ch^{\dim Y+i}(X\times X\times Y) \onto \Ch^{\dim Y+i}\big((X\times Y)_L\big)
\end{CD}
$$
given by the pull-back with respect to the morphism
$X\times X\times Y\to (X\times Y)_L$ induced by the generic point of the second factor in this
triple direct product.

Let $\pi\in\Ch_{\dim X}(X\times X)$ be the projector defining the
summand $M$ of $M(X)$.
Considering $\beta_3$ as a correspondence from $X$ to $X\times Y$, we define
$$
\beta_4\in\Ch^{\dim Y+i}(X\times X\times Y)
$$
as the
composition $\beta_3\compose\pi$.
We get
$$
\beta_5\in\Ch^{\dim Y+i}(X\times Y)=
\Ch_{\dim X-i}(X\times Y)
$$
as the image
of $\beta_4$ under the pull-back with respect to the diagonal
of $X$.
Finally, we define the morphism $\beta$ as the
composition
$$
\begin{CD}
M@>>>M(X)@>{\beta_5}>> M(Y)(i).
\end{CD}
$$

We finish the proof by checking that $\mult(\beta\compose\alpha)=1$.
Since the multiplicity is not changed under extension of scalars, we may do the computation over the field
$\bar{L}$.
Decompositions of the motives of
the varieties $\bar{Y}$ and $\bar{X}$
into sums of Tate motives give certain homogeneous $\Lambda$-bases of their Chow groups with coefficients
in $\Lambda$ (a Tate summand $\Lambda(j)$ of, say, $M(\bar{X})$ gives the basis element
$\Lambda(j)\to M(\bar{X})\in\Ch_j(\bar{X})$).
We may use here an arbitrary motivic decomposition of $\bar{Y}$.
As to $\bar{X}$, let us use a motivic decomposition
which is obtained by taking the chosen above decomposition of $\bar{M}$ and some decomposition of the complementary to
$\bar{M}$ motivic summand of $\bar{X}$.

The basis of $\Ch(\bar{Y})$ contains (a multiple with an invertible coefficient of)
the class $1=[\bar{Y}]$ and (a multiple with an invertible coefficient of) the class $x$ of a rational point.
The element $a$ is in the basis of $\Ch(\bar{X})$.
As to the element $b$, it is in the {\em dual} basis of $\Ch(\bar{X})$, where ``dual'' means dual with respect to the bilinear
form $\Ch(\bar{X})\times\Ch(\bar{X})\to\Lambda$, $(x_1,x_2)\mapsto \deg(x_1\cdot x_2)$.

We consider the Chow group of the products
$\bar{Y}\times \bar{X}$ and $\bar{X}\times \bar{X}$
together with the bases given by the external products of the elements of the bases
of the Chow groups of the factors, where in the case of $\bar{X}\times \bar{X}$
we are using the {\em dual} basis for the first factor and we are using the
``original'' basis for the second factor.
We have $\bar{\alpha}_1=1\times a+\dots$, where ``$\dots$'' stands for a linear combination of only those basis elements
whose first factor has codimension $>\codim 1=0$.
The projector $\pi$ which determines the summand $M$ of the motive of $X$ looks over $\bar{L}$
as $\bar{\pi}=b\times a+\dots$, where ``$\dots$'' stands for a linear combination of
basis elements $b'\times a'$ of $\Ch(\bar{X}\times\bar{X})$ satisfying $b'\ne b$, $a'\ne a$, and $\codim b'\geq\codim b=i$.
Since $\alpha=\pi\compose\alpha_1$, it follows that
$\bar{\alpha}=1\times a+\dots$, where ``$\dots$'' stands
for a linear combination of only those basis elements whose first factor is of positive codimension.

Now let us go through the construction of $\beta$.
To describe $\bar{\beta}_1$, we fix a homogeneous basis of $\Ch(\bar{Y}')$ and use it to build up a basis of
$\Ch(\bar{Y}'\times \bar{Y})$.
Abusing notation we write $1$  also for the unit class in $\Ch(\bar{Y}')$.

We have $\bar{\beta}_1=1\times x+\dots$
where ``$\dots$'' stands
for a linear combination of only those basis elements whose first factor is of positive codimension.
Then we have $\bar{\beta}_2=b\times x+\dots$,
where we are using the basis of
$\Ch(\bar{X}\times \bar{Y})$ obtained out of the {\em dual} basis of $\Ch(\bar{X})$, and
where ``$\dots$'' stands
for a linear combination of only those basis elements whose first factor is of codimension $>\codim b=i$.
For $\beta_3$ we have
$\bar{\beta}_3=b\times1\times x+\dots$, where $1$ is the unit of $\Ch(\bar{X})$ and ``$\dots$'' stands for a linear combination
of basis elements of $\Ch(\bar{X}\times \bar{X}\times\bar{Y})$
which have a second factor of positive codimension {\em or} the first factor of dimension $>\codim b=i$.
For this triple product, we are using the basis obtained out of the dual basis for the first factor and, say,
the original basis for the second factor (the choice of a basis for the second factor is not important).

The element $\bar{\beta}_4$ has the same shape with the additional property that the codimension of the
first factor in each basis element
appearing in the linear combination is $\geq i$.
By this reason, $\bar{\beta}_5$ and also $\bar{\beta}$ look as
$b\times x+\dots$, where ``$\dots$'' stands
for a linear combination of only those basis elements whose first factor is of codimension $>\codim b=i$.
Therefore $\bar{\beta}\compose\bar{\alpha}=1\times x+\dots$, where ``$\dots$'' stands
for a linear combination of only those basis elements whose first factor is of positive codimension.
It follows that
$\mult(\bar{\beta}\compose\bar{\alpha})=1$.
\end{proof}

Let us now translate the statement of Theorem \ref{Gbasic1} in the case where $G$ is of inner type $\cat{A}$.

\begin{dfn}
Let $n$ be a non-negative integer.
Let $D$ be a central division $F$-algebra of degree $p^n$.
For any integer $l$ satisfying $0\leq l\leq n$,
we write $M_{l,D}$ for
the indecomposable upper (see Definition \ref{def-outer} and Remark \ref{outer unique}) summand of
the Chow motive (with coefficients in $\Lambda$) of the generalized Severi-Brauer
variety $X(p^l,D)$.
\end{dfn}

Note that $M_{l,D}\not\simeq M_{m,D}$ for $l\ne m$.

\begin{thm}[Type $\cat{A}$ motivic decomposition theorem]
\label{basic1}
Let $A$ be a central simple $F$-algebra.
Let $p$ be a positive prime integer.
Let $D$ be a $p$-primary division algebra Brauer-equivalent to the $p$-primary component of $A$.
Then any indecomposable summand of the Chow motive with coefficients in a finite connected ring $\Lambda$
with residue field of characteristic $p$ of
any variety $X$ in the class $\cX_A=\cX_{\Aut A}$
is isomorphic to a shift of $M_{l,D}$ for some $l\leq v_p(\ind A_{F(X)})$.
\qed
\end{thm}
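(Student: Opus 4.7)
The plan is to deduce Theorem \ref{basic1} from the general decomposition Theorem \ref{Gbasic1} by identifying, for $G = \Aut A$, each indecomposable upper motive $M_{\tau, G}$ with some $M_{l,D}$. Taking $G = \Aut A$, the set $T_G$ is in bijection with $\{1, \ldots, \deg A - 1\}$, and $X_\tau = X(i_1, \ldots, i_r; A)$ for $\tau = \{i_1, \ldots, i_r\}$. For any extension $E/F$, the subset $\tau_E \subset T_G$ consists of those indices $i$ with $\ind A_E \mid i$. Theorem \ref{Gbasic1} then says every indecomposable summand of $M(X)$ is a shift of some $M_{\tau, G}$ with $\tau \supset \tau_{F(X)}$.

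Set $l = l(\tau) := v_p(\ind A_{F(X_\tau)})$; by the classical formula $\ind A_{F(X_\tau)} = \gcd(i_1, \ldots, i_r, \ind A)$ recalled in \S\ref{Varieties (of flags) of ideals}, this equals $\min\{v_p(i_1), \ldots, v_p(i_r), v_p(\ind A)\}$. The core claim is
\[
M_{\tau, \Aut A} \simeq M_{l, D}.
\]
Since $D$ is the $p$-primary component of $A$, primary decomposition in the Brauer group yields $v_p(\ind A_E) = v_p(\ind D_E)$ over any extension $E/F$, and as $D$ is $p$-primary this means $\ind D_E = p^{v_p(\ind A_E)}$. Taking $E = F(X_\tau)$ gives $\ind D_E = p^l$, so $X(p^l; D)$ acquires an $E$-rational point (a right ideal of reduced dimension equal to the index); closing this point up in $X_\tau \times X(p^l; D)$ gives a correspondence of multiplicity one over the first factor. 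Taking instead $E = F(X(p^l; D))$ gives $\ind A_E$ of $p$-adic valuation $l$, hence the gcd of degrees of closed points on $(X_\tau)_E$, namely $\ind A_E / \gcd(i_1, \ldots, i_r, \ind A_E)$, has $p$-adic valuation zero and is a unit of $\Lambda$. A closed point of such a degree, closed up in $X(p^l; D) \times X_\tau$ and rescaled by the inverse of its degree in $\Lambda$, provides the reverse correspondence of multiplicity one. Corollary \ref{criter out iso} then delivers the claimed isomorphism.

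Finally, one checks that $\tau \supset \tau_{F(X)}$ forces $l(\tau) \leq v_p(\ind A_{F(X)})$. Let $a = \ind A_{F(X)}$. If $a < \deg A$, then $a$ itself lies in $\tau_{F(X)} \subset \tau$, so $a = i_j$ for some $j$ and $l(\tau) \leq v_p(i_j) = v_p(a)$. Otherwise $a = \deg A$, meaning $A$ is already a division algebra, and $l(\tau) \leq v_p(\ind A) = v_p(\deg A) = v_p(a)$.

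The main obstacle is the construction of the two multiplicity-one correspondences in the second paragraph. The conceptual content is that the indecomposable upper summand of $X_\tau$ depends on $\tau$ only through the single integer $l(\tau)$; once this is recognized, the arithmetic of Brauer indices in primary decomposition provides the correspondences, and the passage from a coprime-to-$p$ degree to a multiplicity-one correspondence is costless because such a degree is a unit in the coefficient ring $\Lambda$.
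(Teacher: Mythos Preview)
Your proposal is correct and follows essentially the same route as the paper: invoke Theorem~\ref{Gbasic1}, then identify each $M_{\tau,\Aut A}$ with $M_{l,D}$ via Corollary~\ref{criter out iso} by producing multiplicity-$1$ correspondences in both directions. You are more explicit than the paper about how those correspondences arise (rational point one way, closed point of $p$-coprime degree the other) and about verifying the bound $l(\tau)\leq v_p(\ind A_{F(X)})$, but the argument is the same.
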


\begin{proof}
Any projective ($\Aut A$)-homogeneous variety is isomorphic to a flag variety
$X:=X(i_1,\dots,i_r;A)$ for some integers $r\geq1$ and $i_1,\dots,i_r$ satisfying
$0<i_1<\dots<i_r\leq\deg A$.
Setting $l=v_p\big(\gcd(i_1,\dots,i_r,\ind A)\big)$, we have a rational map
$X\RatM X(p^l,D)$ (because $\ind D_{F(X)}$ divides $p^l$) and a multiplicity $1\in\Lambda$ correspondence $X(p^l,D)\corr X$.
By Corollary \ref{criter out iso}, the upper motivic summand of the variety $X$ is isomorphic to $M_{l,D}$.
\end{proof}

\section
{Motivic structure and incompressibility theorems}
\label{Motivic structure theorem}

We come back to an arbitrary central division $F$-algebra $D$ of degree $p^n$.
The motivic structure theorem describes some properties of the
motives $M_{m,D}$, $0\leq m\leq n$:

\begin{thm}[Motivic structure theorem]
\label{basic2}
The summand $M_{m,D}$ of the motive of the variety $X(p^m,D)$
is outer and
$v_p(\rk M_{m,D})=n-m$.
\end{thm}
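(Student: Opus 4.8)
The plan is to prove both assertions --- that $M_{m,D}$ is outer and that $v_p(\rk M_{m,D}) = n-m$ --- simultaneously by induction on $n = \log_p(\deg D)$, as the introduction warns that the two statements cannot be separated. The base case $n = 0$ is trivial: $D = F$, the variety $X(p^0, D) = X(1;F) = \Spec F$, and $M_{0,D} = \Lambda$, which is outer of rank $1 = p^0$, so $v_p(\rk) = 0 = n - m$. For the inductive step, fix $n \geq 1$ and assume the theorem holds for all central division algebras of degree a smaller power of $p$. The case $m = n$ is handled separately: $X(p^n;D) = \Spec F$ again (an ideal of reduced dimension $\deg D$ is all of $D$), so $M_{n,D} = \Lambda$ is outer of rank $1$ and $v_p(\rk) = 0 = n - n$; similarly $m = 0$ is the Severi-Brauer case, where Corollary \ref{SB-cor} gives that $M(X(1;D))$ is indecomposable of rank $p^n$, hence $M_{0,D} = M(X(1;D))$ is its own outer summand with $v_p(\rk) = n = n - 0$. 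So assume $0 < m < n$.

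The key is to pass to a well-chosen field extension and apply Theorem \ref{basic1} together with the Krull--Schmidt principle (Corollary \ref{krull-schmidt}). I would look at $X = X(p^m;D)$ and a splitting-type extension $L/F$ over which the anisotropic kernel of $\Aut D_L$ is governed by a division algebra of strictly smaller degree; the natural candidate is $L = F(X(1;D))$ or, better, the function field of a generalized Severi-Brauer variety that partially splits $D$, so that $D_L$ is Brauer-equivalent to a division algebra $D'$ of degree $p^{n'}$ with $n' < n$ while keeping good control of the index. Over such $L$, the motive $M(X_L)$ decomposes by the classical results cited in the proof of Theorem \ref{Gbasic1} into shifts of products of projective $(\Aut D')$-homogeneous varieties, and Theorem \ref{basic1} applied to $D'$ expresses every indecomposable summand as a shift of some $M_{l,D'}$ with $l \leq v_p(\ind D'_{L(X)})$. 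The upper summand $(M_{m,D})_L$ further decomposes, and its pieces are among these $M_{l,D'}(j)$; by the inductive hypothesis each such $M_{l,D'}$ is outer with $v_p(\rk M_{l,D'}) = n' - l$. For the rank computation I would use Example \ref{rank grassmannian}: $\rk M(X(p^m;D)) = \binom{p^n}{p^m}$, and Lemma \ref{binom}-type arithmetic (the commented-out lemma $v_p\binom{p^n}{p^m} = n-m$, which I would reinstate or reprove inline) pins down $v_p$ of the total rank; combining this with Lemma \ref{rank-degree} --- the gcd of degrees of closed points on $X(p^m;D)$ is $p^{n-m}$ --- forces $v_p(\rk M_{m,D}) \geq n - m$, and the decomposition over $L$ together with induction forces the reverse inequality, giving equality.

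For the ``outer'' part, $M_{m,D}$ is upper by definition, so I only need to show it is lower, i.e. by Lemma \ref{lemma right outer} that $\mult \pi^t = 1$ where $\pi$ is the defining projector. Equivalently, by Lemma \ref{upper and 0-cycles} (dual version), I must show $\Ch^{\dim X}(M_{m,D}) \neq 0$, i.e. the top codimensional Chow group survives in the upper summand. The idea is a duality/self-correspondence argument: $X = X(p^m;D)$ is a twisted grassmannian, and there is a natural correspondence relating $X(p^m;D)$ to $X(p^n - p^m;D) \cong X(p^m; D^{\mathrm{op}})$ of multiplicity properties that let me transpose information about the upper summand to the lower summand, or alternatively I would exhibit explicitly a rational point's class in $\Ch_{\dim X}$ generated by the lower projector after base change, then descend via the nilpotence principle. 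I expect \textbf{this last step --- proving lowerness --- to be the main obstacle}, because it is exactly the statement equivalent to $p$-incompressibility of $X(p^m;D)$ (Theorem \ref{main}), which the introduction flags as the hard theorem; the real content is an argument that over $L$ the lower Tate summand $\Lambda(\dim X)$ lands inside the piece descending to $(M_{m,D})_L$ rather than in the complementary summand, and this requires the inductive control on the ranks of the $M_{l,D'}$ (so that dimension-counting leaves no room for $\Lambda(\dim X)$ to escape) --- which is precisely why the two assertions must be proved together.
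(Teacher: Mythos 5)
Your high-level blueprint matches the paper's: induction on $n$, the base cases are dispatched the same way ($n=0$ trivial, $m=n$ trivial, $m=0$ via Corollary \ref{SB-cor}), one passes to a field extension over which the index drops, and the two assertions of the theorem are proved jointly because the rank valuation constrains which summands of $M_L$ can be present. But the proposal stops precisely at the two places where the paper has to do real work, and it does not supply the ideas that make the induction close.

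The first missing ingredient is the \emph{specific} base change and decomposition. The paper takes $L=F\big(X(p^{n-1},D)\big)$, so that $D_L$ is Brauer-equivalent to a division algebra $C$ of degree exactly $p^{n-1}$, and invokes \cite[Theorem 10.13]{MR1758562} to write
$M\big(X(p^m,D)\big)_L$ as a sum of shifts of the motives
$M\big(X(i_1,C)\times\dots\times X(i_p,C)\big)$ indexed by tuples $(i_1,\dots,i_p)$ of non-negative integers with $i_1+\dots+i_p=p^m$. This combinatorial indexing by $p$-tuples, and the elementary observation that the only tuples in which every nonzero $i_j$ is divisible by $p^m$ are the $p$ cyclic shifts of $(p^m,0,\dots,0)$, is what makes everything else in the proof work (it pins down where the copies of $M_{m,C}$ sit). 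Your proposal is noncommittal about $L$ (you even float $F(X(1;D))$, which over-splits $D$ and is the wrong choice) and never produces this decomposition.

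The second missing ingredient is the argument that forces $v_p(\rk M_{m,D})\le n-m$; your proposal hand-waves ``the decomposition over $L$ together with induction forces the reverse inequality'' without saying how. The paper's actual argument is a delicate parity count: one shows that the number of copies of $M_{m-1,C}$ (the only indecomposable summands with $v_p(\rk)=n-m$) appearing in $(M_{m,D})_L$ is divisible by $p$. This is done by organizing the tuples $(i_1,\dots,i_p)$ with $\min_j v_p(i_j)=m-1$ into orbits for the cyclic $\Z/p$-action, treating the fixed tuple $(p^{m-1},\dots,p^{m-1})$ separately via a further rank-valuation estimate on $X(p^{m-1},C)^{\times p}$, and finally subtracting the $M_{m-1,C}$'s that land in $(M_{m-1,D})_L$ — which number exactly $p$ by the same cyclic-shift reasoning. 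None of this orbit counting appears in your writeup. Similarly, the lowerness of $M_{m,D}$ in the paper falls out of this same book-keeping — once one knows the copy of $M_{m,C}$ inside the lower tuple $(0,\dots,0,p^m)$ lands inside $(M_{m,D})_L$, induction on lowerness of $M_{m,C}$ finishes it — whereas your proposed route via the duality $X(p^m;D)\leftrightarrow X(p^n-p^m;D^{\mathrm{op}})$ is not what the paper does and is not developed to the point where one could see it works. You honestly flag lowerness as ``the main obstacle,'' which is fair; but as it stands the proposal identifies the shape of the argument without supplying its content.
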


\begin{proof}
We prove Theorem \ref{basic2} by induction on $n$.
The base of the induction is the case of $n=0$ which is trivial.
Below we are assuming that $n\geq1$.

The statement is trivial for $m=n$.
Below we are assuming that $m<n$.

In the case of $m=0$  we know (see Corollary
\ref{SB-cor}) that $M_{0,D}=M\big(X(1,D)\big)$ and $\rk M\big(X(1,D)\big)=p^n$.
Below we are assuming that $m\geq1$.

We ask the reader to note that in this proof we do not pay attention
to the shifting numbers of summands in motivic decompositions:
when we say that a motive $W$ is a summand of a motive $V$, we mean that a {\em shift}
of $W$ is a summand of $V$.

Let $L$ be the function field of the variety $X(p^{n-1},D)$.
Let $C$ be a central division $L$-algebra (of degree $p^{n-1}$) Brauer-equivalent
to $D_L$.
By \cite[Theorem 10.13]{MR1758562} (or also by \cite{MR2178658}),
the motive $M\big(X(p^m,D)\big)_L$ decomposes into the sum (of some shifts) of the motives
$$(i_1,\dots,i_p):=
M\big(X(i_1,C)\times X(i_2,C)\times\dots\times
X(i_p,C)\big),
$$
where $i_1,\dots,i_p$ run over the non-negative integers
satisfying $i_1+i_2+\dots+i_p=p^m$.
The upper summand in this decomposition is
$(p^m,0,\dots,0)=M\big(X(p^m,C)\big)$.

Let $M=M_{m,D}$.
Since the summand $M$ of $M\big(X(p^m,D)\big)$ is upper,
the $L$-motive $M_L$ contains the summand $M_{m,C}$ of the summand $(p^m,0,\dots,0)$.
We claim that in fact $M_L$ contains the $p$ summands $M_{m,C}$ coming from each of the $p$ summands
\begin{multline*}
(p^m,0,\dots,0)=M\big(X(p^m,C)\big),\;
(0,p^m,0,\dots,0)=M\big(X(p^m,C)\big),\;\\
\dots,\;
(0,\dots,0,p^m)=M\big(X(p^m,C)\big).
\end{multline*}
Indeed, since the degree of any closed point on the variety $X(p^m,D)$ is divisible by $p^{n-m}$,
we have $v_p(\rk M)\geq n-m$ by Lemma \ref{rank-degree}.
On the other hand, by the induction hypothesis, $v_p(\rk M_{m,C})=n-1-m$.
The remaining summands of the complete motivic decomposition of $X(p^m,D)_L$
are $M_{l,C}$ with $l\leq m-1$;
$v_p$ of their ranks are at least $n-m$
by Lemma \ref{rank-degree} (or by the induction hypothesis once again).

We have in particular proved that $M_L$ contains the summand $M_{m,C}$ coming from
the lower summand $(0,\dots,0,p^m)$ of $M\big(X(p^m,D_L)\big)$.
Since $M_{m,C}$ is a lower summand of $X(p^m,C)$ (by the induction hypothesis),
it follows that $M$ is lower.

It remains to prove the statement on the rank of $M=M_{m,D}$.

To do this, we look at ranks of the summands in the complete
decomposition of $M_L$.
We have $p$ summands $M_{m,C}$ with $v_p(\rk M_{m,C})=n-1-m$.
So, we have $v_p(\rk M_{m,C}^{\oplus p})=n-m$ for this part of the complete
decomposition of $M_L$.

The summands $M_{l,C}$ with $l\leq m-2$ have $v_p(\rk M_{l,C})\geq n-m+1$;
so, we do not care about the number of such summands.

In order to show that $v_p(\rk M)=n-m$, it suffices to show that the number of
the summands $M_{m-1,C}$ (which have
$v_p(\rk M_{m-1,C})=n-m$) in the complete decomposition of $M_L$ is divisible by $p$.

Let us first count the number of summands $M_{m-1,C}$ in the complete motivic decomposition
of $X(p^m,D)_L$.
We use the complete decomposition of $X(p^m,D)_L$ which is a refinement of the decomposition into a
sum of $(i_1,\dots,i_p)$ considered above.
There are two types of such summands $M_{m-1,C}$: those which appear as
summands of $M\big(X(p^m,C)\big)$ (if any) and all the others.
Since the number of the summands $M\big(X(p^m,C)\big)$ is $p$, the number of the summands
$M_{m-1,C}$ of the first type is divisible by $p$.

The summands of the second type are summands of the summands
$(i_1,\dots,i_p)$
with $\min_j v_p(i_j)=m-1$.
There is precisely one such summand $(i_1,\dots,i_p)$ with $i_1=\dots=i_p$, namely,
the summand $(p^{m-1},\dots,p^{m-1})$.
All the other such summands $(i_1,\dots,i_p)$ can be divided into disjoint groups which are orbits
of the action of the cyclic group $\Z/p$ on the indices:
the group containing a given $(i_1,\dots,i_p)$ consists of
$$
(i_1,\dots,i_p),\;
(i_2,\dots,i_p,i_1),\;\dots,\;
(i_p,i_1,\dots,i_{p-1})
$$
(note that these are $p$ different summands because the integer $p$ is prime).
So, the number of the summands $M_{m-1,C}$ coming from the summands $(i_1,\dots,i_p)$ different from
$(p^{m-1},\dots,p^{m-1})$ is divisible by $p$.

As to the remaining summand $(p^{m-1},\dots,p^{m-1})=
M\big(X(p^{m-1},C)^{\times p}\big)$,
we can show (using the induction hypothesis)
that the number of the summands $M_{m-1,C}$ in its complete motivic
decomposition is also divisible by $p$.
More generally, we can show
that the number of the summands $M_{m-1,C}$ in the complete motivic
decomposition of $X(p^{m-1},C)^{\times r}$ is divisible by $p$ as long as $r\geq2$.
Indeed,
$$
v_p\Big(\rk M\big(X(p^{m-1},C)^{\times r}\big)\Big)=r(n-m)>n-m
$$ (we recall that $m<n$).
The complete motivic decomposition of $X(p^{m-1},C)^{\times r}$ consists of the motives
$M_{l,C}$ with $l\leq m-1$.
Finally, $v_p(\rk M_{l,C})>n-m$ for $l<m-1$ and $v_p(\rk M_{m-1,C})=n-m$.

We have shown that the number of summands $M_{m-1,C}$ in the complete motivic
decomposition of $X(p^m,D)_L$ is divisible by $p$.
We finish by showing that the number of those summands $M_{m-1,C}$ which are not
in the complete decomposition of $M_L$ is also divisible by $p$.

If a summand $M_{m-1,C}$ is not in $M_L=(M_{m,D})_L$, then it is in
$(M_{m-1,D})_L$.
However the number of summands $M_{m-1,C}$ in $(M_{m-1,D})_L$ is
equal to $p$.
\end{proof}

Using Theorems \ref{basic1} and \ref{basic2},
we produce a new example of a generalized Severi-Brauer variety with
indecomposable motive (trivial for $n=1$, well-known for $n=2$, recently proved using
$K$-theory by Maksim Zhykhovich for $n=3$, new for $n\geq4$):

\begin{thm}
\label{2-2^n}
Let $F$ be a field.
Let $D$ be a central division $F$-algebra of degree $2^n$ with $n\geq 1$.
Then the motive with coefficients in $\F_2$ (or, more generally, in a finite connected ring with residue field
of characteristic $2$) of the variety $X(2;D)$ is indecomposable.
\end{thm}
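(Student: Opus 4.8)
The plan is to show that the motive of $X=X(2;D)$ has rank $2^{n-1}(2^n-1)$, equals the rank of its unique upper indecomposable summand $M_{1,D}$ up to possibly one other summand, and then rule out the possibility of a nontrivial decomposition by a rank count. Recall from Example~\ref{rank grassmannian} that $\rk M(X)=\binom{2^n}{2}=2^{n-1}(2^n-1)$, so $v_2(\rk M(X))=n-1$. By Theorem~\ref{basic1}, every indecomposable summand of $M(X)$ is a shift of $M_{l,D}$ with $l\le v_2(\ind D_{F(X)})$; since $\gcd(2,2^n)=2$ we have $v_2(\ind D_{F(X)})=n-1$, so the summands are among the $M_{l,D}$ with $0\le l\le n-1$. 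The key input is Theorem~\ref{basic2}: $v_2(\rk M_{l,D})=n-l$, so the largest $2$-adic valuation occurring is $n$ (for $l=0$), and the value $n-1$ is attained exactly by $M_{1,D}$; every $M_{l,D}$ with $l\le n-1$ has $v_2(\rk M_{l,D})\ge 1$.

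First I would pin down which summands can actually appear. The motive $M(X)$ is upper (it is the whole motive of an irreducible variety, hence an outer summand of itself), so its unique indecomposable upper summand is $M_{1,D}$ by Theorem~\ref{basic1} applied with $l=v_2(\ind D_{F(X)})=n-1$ and Corollary~\ref{criter out iso} — more precisely, $X=X(2;D)$ has a multiplicity-one self-correspondence pair to itself trivially, and the upper summand is $M_{n-1,\text{(degree }2^n\text{ algebra)}}$; but in the notation $M_{m,D}$ of the Type~$\cat{A}$ theorem the relevant upper summand of $X(2;D)=X(2^1;D)$ is $M_{1,D}$. So $M(X)=M_{1,D}\oplus N$ for some (possibly zero) complementary summand $N$, and every indecomposable summand of $N$ is a shift of some $M_{l,D}$ with $l\le n-1$, and $N$ contains no upper summand, so in particular $N$ has no shift of $M_{1,D}$ as a summand (the upper summand of $M(X)$ is unique up to isomorphism by Remark~\ref{outer unique}, and it already occurs once in $M_{1,D}$).

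The crux is the $2$-adic rank bookkeeping. We have $v_2(\rk M_{1,D})=n-1=v_2(\rk M(X))$, hence $v_2(\rk N)\ge n-1$ would force... no: since $\rk M(X)=\rk M_{1,D}+\rk N$ and $v_2$ of the first two are both $n-1$, we get $v_2(\rk N)\ge n-1$ as well (if $N\ne 0$). But $N$ is a sum of shifts of $M_{l,D}$ with $l\in\{0,1,\dots,n-1\}$, and it contains no shift of $M_{1,D}$; for $l\ne 1$ with $l\le n-1$ we have $v_2(\rk M_{l,D})=n-l$, which equals $n-1$ only when $l=1$ — contradiction unless all summands of $N$ have $v_2(\rk M_{l,D})>n-1$, impossible since that needs $l<1$, i.e. $l=0$, but $v_2(\rk M_{0,D})=n>n-1$ is allowed. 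So the real obstruction is ruling out summands $M_{0,D}$ in $N$: I would argue that $M_{0,D}=M(X(1;D))$ (Corollary~\ref{SB-cor}) has rank $2^n$, and if a shift of it occurred in $M(X)$ then over the function field $L$ of $X(1;D)$ (which splits $D$) this would contribute $2^n$ Tate summands all of the \emph{same} Tate twist pattern as $M(X(1;D))_L$; comparing with the explicit Tate decomposition of $M(X)_L$ (the split grassmannian $\mathrm{Gr}(2,2^n)$, whose Poincaré polynomial is the Gaussian binomial $\binom{2^n}{2}_t=\frac{(1-t^{2^n})(1-t^{2^n-1})}{(1-t)(1-t^2)}$) shows no graded piece has multiplicity as large as required, or more cleanly: the lower summand of $M(X)$ is also $M_{1,D}$ since $M_{1,D}$ is outer by Theorem~\ref{basic2}, so $N$ contains no lower summand either, yet $M_{0,D}=M(X(1;D))$ is its own lower summand, forcing $N$ to contain no shift of $M_{0,D}$. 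Hence every summand of $N$ is a shift of $M_{l,D}$ with $2\le l\le n-1$, all having $v_2(\rk)=n-l\le n-2<n-1$, contradicting $v_2(\rk N)\ge n-1$. Therefore $N=0$ and $M(X)=M_{1,D}$ is indecomposable.

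\textbf{Main obstacle.} The delicate point is justifying that $N$ contains neither an upper nor a lower shift of $M_{l,D}$: for the upper direction this is Remark~\ref{outer unique} (uniqueness of the upper summand), and for the lower direction one uses that $M_{1,D}$ is \emph{outer} — which is exactly the content of Theorem~\ref{basic2} at $m=1$ — so that $M(X)$'s lower summand coincides with $M_{1,D}$, hence $N$ is neither-upper-nor-lower, excluding $M_{0,D}$ (self-lower) and $M_{1,D}$ (both). Once that is in place, the valuation arithmetic $v_2(\rk M_{l,D})=n-l$ from Theorem~\ref{basic2} closes the argument: $v_2(\rk M(X))=n-1$ can only be matched by $\rk M_{1,D}$ alone, leaving no room for any further summand. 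I expect the write-up to be short, with the only genuinely substantive citations being Theorems~\ref{basic1} and~\ref{basic2} and Corollary~\ref{SB-cor}.
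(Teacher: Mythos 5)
Your overall setup is right — by Theorem~\ref{basic1} (with $v_2(\ind D_{F(X)})=v_2\gcd(2,2^n)=1$, not $n-1$ as you wrote) the only indecomposable summands that can occur in $M(X(2;D))$ are shifts of $M_{0,D}$ and $M_{1,D}$, and Theorem~\ref{basic2} says $M_{1,D}$ is outer, which does pin down the single unshifted copy of $M_{1,D}$. But the argument you give for excluding shifts of $M_{0,D}$ from the complementary summand $N$ does not hold up. ``Upper'' and ``lower'' are properties of a summand \emph{of a given variety}: $M_{0,D}(i)$ sitting inside $M(X(2;D))$ is upper only when $i=0$ and lower only when $i=\dim X(2;D)-\dim X(1;D)$. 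The fact that $M_{0,D}$ is outer as a summand of $M(X(1;D))$ says nothing about $M_{0,D}(i)$ for intermediate shifts $i$, so ``$N$ has no upper and no lower summand'' rules out only the two extreme shifts, not the rest. This is precisely the hard case, and the rank bookkeeping cannot finish it either: writing $\rk M(X(2;D))=2^{n-1}(2^n-1)=\rk M_{1,D}+a\cdot 2^n$ and $\rk M_{1,D}=2^{n-1}s$ with $s$ odd gives $s=2^n-1-2a$, which is odd for every $a\geq 0$, so no $2$-adic contradiction arises from $a>0$. Thus the proposal has a genuine gap exactly where the problem is hardest.

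The paper closes this gap by a different device, which your write-up does not use: an induction on $n$, together with passage to a field extension $L/F$ with $\ind D_L=2^{n-1}$. Writing $C$ for a division algebra Brauer-equivalent to $D_L$ and applying the induction hypothesis to $C$, one computes the complete decompositions of both $M(X(1;D))_L$ and $M(X(2;D))_L$ explicitly; a copy $M(X(1;D))(i)$ inside $M(X(2;D))$ would contribute $M_{0,C}(i)\oplus M_{0,C}(2^{n-1}+i)$ over $L$, but the $M_{0,C}$-shifts occurring in $M(X(2;D))_L$ form a block of $2^{n-1}$ consecutive integers, too short to contain both $i$ and $i+2^{n-1}$. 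You would need to supply something with this flavour — an explicit decomposition over a generic splitting field, or at least an argument controlling which shifts of $M_{0,D}$ can occur — before the proof is complete.
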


\begin{proof}
Let us prove it by induction on $n$.
Assume that $n\geq 2$ and that the statement is already proved for algebras (over all fields)
of degree $<2^n$.
By Theorem \ref{basic1}, the motive of $X(2,D)$ is a sum of shifted copies of $M_{0,D}$ and $M_{1,D}$.
By Theorem \ref{basic2}, the upper summand $M_{1,D}$ of $M\big(X(2,D)\big)$ is outer and therefore there is no other
shifted copy of $M_{1,D}$ in the complete motivic decomposition of $X(2,D)$.
Moreover, $M_{0,D}=M\big(X(1,D)\big)$ by Corollary \ref{SB-cor}, and it follows that
if the motive of $X(2;D)$ is decomposable (for a
given $D$), then some shift of $M\big(X(1;D)\big)$ is a summand of
$M\big(X(2;D)\big)$.
We can check however that no shift of $M\big(X(1;D)\big)_L$ is a summand of
$M\big(X(2;D)\big)_L$, where $L/F$ is a field extension such that $\ind
D_L=2^{n-1}$.

Indeed, let $C$ be a central division $L$-algebra Brauer-equivalent to $D_L$.
The complete decompositions of the motives of these two varieties over $L$ are:
$$
M\big(X(1;D)\big)_L=M\big(X(1;C)\big)\oplus M\big(X(1;C)\big)(2^{n-1})=
M_{0,C}\oplus M_{0,C}(2^{n-1})
$$
and (we apply the induction hypothesis to $C$)
\begin{multline*}
M\big(X(2;D)\big)_L=\\
M\big(X(2;C)\big)\oplus
\Big(M\big(X(1;C)\big)\otimes M\big(X(1;C)\big)\Big)(2^{n-1}-1)\oplus
M\big(X(2;C)\big)(2^n)=\\
M_{1,C}\oplus
M_{0,C}(2^{n-1}-1)\oplus M_{0,C}(2^{n-1})\oplus\dots\oplus M_{0,C}(2^n-2)
\oplus
M_{1,C}(2^n),
\end{multline*}
and the motives $M_{0,C}$ and $M_{1,C}$ are not isomorphic.
\end{proof}

Theorem \ref{basic2} together with Lemma \ref{outer and incompressible} gives
the incompressibility theorem:

\begin{thm}
[Incompressibility theorem]
\label{main}
Let $p$ be a positive prime integer.
Let $n$ be a non-negative integer.
Let $F$ be a field.
Let $D$ be a central division $F$-algebra of degree $p^n$.
Let $m$ be an integer in the interval $[0,\;n]$.
Then the variety $X(p^m;D)$ is $p$-incompressible.
\qed
\end{thm}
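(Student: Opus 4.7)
The plan is to invoke Lemma \ref{outer and incompressible}, which asserts that $X = X(p^m; D)$ is $p$-incompressible provided every upper summand of $M(X)$ (with coefficients in $\F_p$, or in any finite connected coefficient ring with residue field of characteristic $p$) is outer. By construction $M_{m,D}$ is the indecomposable upper summand of $M(X)$, and Theorem \ref{basic2} already tells us that this particular summand is outer; the remaining task is just to propagate outerness to an arbitrary upper summand.

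So, given any upper summand $N$ of $M(X)$, I would apply the Krull-Schmidt principle (Corollary \ref{krull-schmidt}, valid here because $X$ is projective $G$-homogeneous, hence geometrically split and satisfying the nilpotence principle) to obtain a complete decomposition of $N$ into indecomposables. At least one of these indecomposables must be upper, for otherwise $\Ch^0(N)$ would vanish, contradicting upperness of $N$. By Remark \ref{outer unique}, the only indecomposable upper summand of $M(X)$ up to isomorphism is $M_{m,D}$, so $N \simeq M_{m,D} \oplus N'$ for some complementary motive $N'$. Writing $d = \dim X(p^m; D)$, Theorem \ref{basic2} asserts in particular that $M_{m,D}$ is lower, i.e.\ $\Ch_d(M_{m,D}) \ne 0$. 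Hence $\Ch_d(N) \supseteq \Ch_d(M_{m,D}) \ne 0$, which shows $N$ is lower and therefore outer.

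With the hypothesis of Lemma \ref{outer and incompressible} verified, the $p$-incompressibility of $X(p^m; D)$ follows at once. There is essentially no obstacle at this stage: all the depth sits in Theorem \ref{basic2}, whose proof proceeds by induction on $n$ via the generic splitting field $L = F(X(p^{n-1}; D))$, using Lemma \ref{rank-degree} to bound $p$-adic valuations of ranks and a $\Z/p$-orbit counting argument on the flag summands $(i_1,\dots,i_p)$ to force divisibility by $p$ in the relevant multiplicities. Given that input, the incompressibility theorem is a one-step Krull-Schmidt bookkeeping.
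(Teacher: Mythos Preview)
Your proposal is correct and coincides with the paper's own argument: the paper simply states that Theorem \ref{basic2} together with Lemma \ref{outer and incompressible} gives the incompressibility theorem, and you have merely spelled out the Krull-Schmidt bookkeeping (that any upper summand contains $M_{m,D}$ and hence is lower) which the paper leaves implicit.
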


For an arbitrary central simple $F$-algebra $A$, canonical $p$-dimension of the
varieties in $\cX_A$ is computed as follows:

\begin{cor}
\label{cor basic2}
Let $X$ be a variety in $\cX_A$.
Let $n=v_p(\ind A)$ and $m=v_p(\ind A_{F(X)})$.
Then $\cd_p X=p^m(p^n-p^m)$.
\end{cor}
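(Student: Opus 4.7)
My strategy is to identify the upper indecomposable motivic summand of $M(X) \in \CM(F,\F_p)$ with $M_{m,D}$, and then, via the multiplicity-$1$ correspondences produced by Corollary~\ref{criter out iso}, to transfer the $p$-incompressibility of $Y := X(p^m;D)$ (Theorem~\ref{main}) to the computation of $\cd_p X$.

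\emph{Identifying the upper motive.} By Theorem~\ref{basic1}, the upper indecomposable summand $U_X$ of $M(X)$ is isomorphic to a shift of $M_{l,D}$ for some $l \leq m$; the upper-ness condition $\Ch^0(U_X) \neq 0$ pins the shift to zero, so $U_X \simeq M_{l,D}$ for a specific $l$ singled out by Remark~\ref{outer unique}. To show $l = m$, I will apply Corollary~\ref{criter out iso} to the isomorphism $U_X \simeq M_{l,D} = U_{X(p^l;D)}$, producing a multiplicity-$1$ correspondence $X \corr X(p^l;D)$. Pulling back to the generic fiber of the first projection $X \times X(p^l;D) \to X$ yields a $0$-cycle of degree prime to $p$ on $X(p^l;D)_{F(X)}$. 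By the formula recalled in Section~\ref{Varieties (of flags) of ideals}, the gcd of the degrees of closed points on $X(p^l;D)_{F(X)}$ equals $\ind D_{F(X)}/\gcd(p^l, \ind D_{F(X)}) = p^{m-\min(l,m)}$ (using that $v_p(\ind D_{F(X)}) = v_p(\ind A_{F(X)}) = m$, since $D$ is the $p$-primary part of the division algebra Brauer-equivalent to $A$), and this gcd is prime to $p$ only when $l \geq m$; hence $l = m$.

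\emph{Transferring canonical dimension and concluding.} Since $U_X \simeq M_{m,D}$ is also the upper motive of $Y$, Corollary~\ref{criter out iso} will supply multiplicity-$1$ correspondences in both directions between $X$ and $Y$. The standard correspondence argument then yields $\cd_p X = \cd_p Y$: an irreducible component of the supporting cycle of a multiplicity-$1$ correspondence $X \corr Y$ provides a dominant morphism $Z \to X$ of degree prime to $p$ together with a morphism $Z \to Y$, and composing the latter with a compression of $Y$ to image dimension $\cd_p Y$ gives $\cd_p X \leq \cd_p Y$; symmetry provides the reverse inequality. Theorem~\ref{main} then gives $\cd_p Y = \dim Y = p^m(p^n - p^m)$, completing the proof. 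I expect the principal difficulty to lie in the identification step, where converting the motivic isomorphism into the precise index statement hinges on the closed-points-degree formula and the behaviour of $\ind D$ under passage to $F(X)$; by contrast, the transfer in the second step is a well-known principle for canonical $p$-dimension across matching upper motives.
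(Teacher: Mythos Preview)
Your argument is correct in outline but takes a genuinely different route from the paper, and the sketch of the transfer step is not quite right as written.

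The paper's proof is entirely elementary and bypasses motives at this stage: it passes to a finite prime-to-$p$ extension $L/F$ over which $A_L$ is Brauer-equivalent to $D_L$, observes that over $L$ there exist honest \emph{rational maps} $X_L\RatM X(p^m;D_L)$ and $X(p^m;D_L)\RatM X_L$ (coming directly from the ideal-flag description, since $v_p(\ind A_{F(X)})=m$), and then applies the two standard facts recorded in \S\ref{Canonical dimension}: invariance of $\cd_p$ under prime-to-$p$ finite extensions, and equality of $\cd_p$ for varieties linked by rational maps in both directions. Theorem~\ref{main} then finishes. Nothing motivic is needed.

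Your approach instead identifies the upper motive $U_X\simeq M_{m,D}$ via Theorem~\ref{basic1} and Corollary~\ref{criter out iso}; this step is correct and the degree/index computation pinning down $l=m$ is clean. What your route buys is a more structural explanation: the equality $\cd_p X=\cd_p X(p^m;D)$ is seen as a consequence of having the same upper indecomposable summand. However, your justification of the transfer $\cd_p X\le\cd_p Y$ is not accurate: from $Z\to X$ dominant of $p$-coprime degree and $Z\to Y$, ``composing'' with a $p$-compression of $Y$ does not literally make sense (a $p$-compression is realized on a prime-to-$p$ cover $Y'\to Y$, and $Z\to Y$ need not be dominant, nor does this produce a map back to $X$). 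The correct argument composes the multiplicity-$1$ correspondences $X\corr Y$, the compressing correspondence $Y\corr Y$, and $Y\corr X$ to obtain a multiplicity-$1$ correspondence $X\corr X$ whose image in the second factor has dimension $\le\cd_p Y$, and then invokes the correspondence characterization of $\cd_p$ for homogeneous varieties from \cite{MR2258262}. You are right that this principle is known; just be aware that the one-line sketch you gave does not capture it.
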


\begin{proof}
Let $D$ be the $p$-primary part of a central division $F$-algebra
Brauer-equivalent to $A$.
Let $L/F$ be a finite field extension of prime to $p$ degree such that the
$L$-algebra $A_L$ is Brauer-equivalent to $D_L$.
There exist rational maps $X_L\RatM X(p^m,D_L)$ and $X(p^m,D_L)\RatM X_L$ (because each of the varieties $X_L$ and $X(p^m,D_L)$ has a rational point over the function field of the other).
It follows (cf.\!
\cite{MR2713320})
that
\begin{equation*}
\cd_p X=\cd_p X_L=\cd_p X(p^m,D_L)=\dim X(p^m,D)=p^m(p^n-p^m).
\qedhere
\end{equation*}
\end{proof}


\def\cprime{$'$}

\end{document}